\newtheorem{theorem}{Theorem}[section]
\newtheorem{lemma}[theorem]{Lemma}
\newtheorem{corollary}[theorem]{Corollary}
\newtheorem{proposition}[theorem]{Proposition}
\newtheorem*{thmM}{Main Theorem}
\newtheorem*{thmS}{Sharkovsky Theorem}
\theoremstyle{definition}
\newtheorem{definition}[theorem]{Definition}
\theoremstyle{remark}
\newtheorem{remark}[theorem]{Remark}
\numberwithin{equation}{section}
\newcommand{\N}{\mathcal{N}}
\newcommand{\M}{\mathcal{M}}
\newcommand{\K}{\mathcal{K}}
\newcommand{\sm}{\setminus}
\newcommand{\sh}{\mathrm{Sh}}
\newcommand{\dtp}{\mathrm{RT}}
\newcommand{\sha}{\succ\mkern-14mu_s\;}
\newcommand{\Sh}{\operatorname{Sh}}
\newcommand{\Per}{\operatorname{P}}
\newcommand{\Tow}{\operatorname{Tow}}
\renewcommand\le{\leqslant}
\renewcommand\ge{\geqslant}
\begin{document}

\date{November 23, 2018}

\title{Renormalization towers and their forcing}

\author[A.~Blokh]{Alexander~Blokh}

\author[M.~Misiurewicz]{Micha{\l}~Misiurewicz}

\address[Alexander~Blokh]
{Department of Mathematics\\ University of Alabama at Birmingham\\
Birmingham, AL 35294}
\email[Alexander~Blokh]{ablokh@math.uab.edu}

\address[Micha{\l}~Misiurewicz]
{Department of Mathematical Sciences\\Indiana University-Pur\-due
  University Indianapolis\\ 402 N. Blackford Street\\
Indianapolis, IN 46202}
\email[Micha{\l}~Misiurewicz]{mmisiure@math.iupui.edu}

\subjclass[2010]{Primary 37E15; Secondary 37E05, 37E20}

\keywords{Sharkovsky order; forcing relation; cyclic patterns}

\thanks{Research of Micha{\l} Misiurewicz was supported by grant
  number 426602 from the Simons Foundation.}

\begin{abstract}
A cyclic permutation $\pi:\{1, \dots, N\}\to \{1, \dots, N\}$ has a
\emph{block structure} if there is a partition of $\{1, \dots, N\}$
into $k\notin\{1,N\}$ segments (\emph{blocks}) permuted by $\pi$; call
$k$ the \emph{period} of this block structure. Let $p_1<\dots <p_s$ be
periods of all possible block structures on $\pi$. Call the finite
string $(p_1/1,$ $p_2/p_1,$ $\dots,$ $p_s/p_{s-1}, N/p_s)$ the
\emph{renormalization tower of $\pi$}. % and denote it by $\dtp(\pi)$.
The same terminology can be used for \emph{patterns}, i.e., for families
of cycles of interval maps inducing the same (up to a flip) cyclic
permutation. A renormalization tower $\mathcal M$ \emph{forces} a
renormalization tower $\mathcal N$ if every continuous interval map
with a cycle of pattern with renormalization tower $\mathcal M$ must
have a cycle of pattern with renormalization tower $\mathcal N$. We
completely characterize the forcing relation among renormalization
towers. Take the following order among natural numbers: $ 4\gg 6\gg
3\gg \dots \gg 4n\gg 4n+2\gg 2n+1\gg\dots \gg 2\gg 1 $ understood in
the strict sense. We show that the forcing relation among
renormalization towers is given by the lexicographic extension of this
order. Moreover, for any tail $T$ of this order there exists an
interval map for which the set of renormalization towers of its cycles
equals $T$.
\end{abstract}

\maketitle

\section{Introduction and statement of the results}\label{s:intro}

From the standpoint of the theory of dynamical systems, the simplest
type of limit behavior of a point under iterates of a continuous map
is periodic. Thus, \emph{cycles} (called also \emph{periodic orbits})
play an important role in dynamics. The description
of possible sets of types of cycles of maps from a certain class is a
natural and appealing problem.

Since maps that are topologically conjugate are considered equivalent,
it is natural to declare two cycles equivalent if there exists a
homeomorphism of the space which sends one of them onto the other one.
On the interval this means that if two cycles induce the same cyclic
permutations or the cyclic permutations coinciding up to a flip then
these cycles should be viewed as equivalent. Classes of equivalence
are then called \emph{cyclic patterns} (since we consider \emph{only}
cyclic patterns and permutations, we will call them simply
\emph{patterns} and \emph{permutations} from now on). If an interval
map $f$ has a cycle that belongs to a specific pattern (equivalently,
induces a specific permutation), then one can say that the pattern
(the permutation) is \emph{exhibited} by $f$. Thus, one comes across a
problem of characterizing possible sets of patterns (or sets of
permutations) exhibited by interval maps. Here the permutations
induced by cycles on the interval (and considered up to the flip)
should be thought of as types of cycles.

Permutations define corresponding patterns. Vice versa, given a
pattern, there are two permutations (one of them is a flip of the
other one) that define the pattern. Permutations may have various
dynamics properties (e.g., one can talk about permutations of a
certain period, etc). In this case we will also say that the
corresponding pattern has these dynamic properties (e.g., one can talk
about patterns of a certain period, etc). In what follows we will
mostly talk about patterns.

How does one describe patterns? This naive question seems to have an
obvious answer: patterns should be described by permutations that they
are. However an important drawback of that approach is that such
description is too detailed and complicated. To have more information
may not always be better because then the structure of the set of all
patterns exhibited by a map is buried under piles of inessential
details. In other words, permutations chosen as types of cycles are
not necessarily a good choice because then the set of all permutations
induced by cycles of a given interval map may have a very complicated
structure not allowing for a transparent description.

A different (opposite in some sense) approach is to describe a pattern
by stripping it of all its characteristics but one: the period. Then,
of course, a lot of very different patterns will be lumped into one
big group of patterns of the same period. This approach may seem to be
too coarse and imprecise. However it is this idea that was adopted in
one-dimensional dynamics, thanks to a remarkable result, obtained by
A. N. Sharkovsky in the 1960s (see \cite{sha64} and~\cite{shatr} for
its English translation). To state it let us first introduce the {\it
  Sharkovsky ordering} for positive integers:
\[
3\sha 5\sha 7\sha\dots\sha 2\cdot3\sha 2\cdot5\sha 2
\cdot7\sha\dots\sha 4\sha 2\sha 1.
\]
Denote by $\sh(k)$ the set of all integers $m$ with $k\sha m$,
including $k$ itself, and by $\sh(2^\infty)$ the set
$\{1,2,4,8,\dots\}$; denote by $\Per(g)$ the set of periods of cycles
of a map $g$.

\begin{thmS}
If $g:[0,1]\to [0,1]$ is continuous, $m\sha n$ and $m\in \Per(g)$ then
$n\in \Per(g)$ and so there exists $k\in\mathbb N\cup 2^\infty$ with
$\Per(g)=\Sh(k)$. Conversely, if $k\in\mathbb N\cup 2^\infty$ then
there exists a continuous map $f:[0, 1]\to [0, 1]$ such that $\Per(f)
= \Sh(k)$.
\end{thmS}

The role of the Sharkovsky Theorem in the theory of dynamical systems
is, in particular, based upon the fact that its first part can be
understood in terms of \emph{forcing relation}. Indeed, it states that
if $m\sha n$ then the fact that an interval map has a point of period
$m$ \emph{forces} the presence of a point of period $n$ among the
periodic points of the map. In short, \emph{period $m$ forces period
  $n$}. Notice that with this understanding of the concept of forcing,
every number forces itself. If we think of the period of a cycle as
its type, we can view the Sharkovsky Theorem as a result showing how
such types of cycles (i.e., their periods) force each other.

It is natural to try to replace the period by some notion that
provides more information, but is easy to interpret and handle. To
explain our choice we discuss some properties of interval cycles
below. Any of the concepts that we introduce will be defined only for,
say, permutations, but can be similarly defined for the corresponding
cycles and patterns; the same applies to the corresponding notation.

\begin{definition}[Block structure]\label{d:block-str}
A cyclic permutation $\pi:\{1, \dots, N\}\to \{1, \dots, N\}$ has a
\emph{block structure} if there is a partition of $\{1, \dots, N\}$
into $k\notin\{1,N\}$ segments (\emph{blocks}) permuted by $\pi$. We
will call $k$ the \emph{period} of this block structure. A permutation
has a \emph{division} if it has block structure of period $2$.
\end{definition}

We will show later that if two block structures of the same
permutation have periods $p<q$ then $q$ is a multiple of $p$.

A permutation with a block structure can be studied in two steps:
study the factor-permutation obtained if each block is collapsed to a
point, while the order among blocks is kept, and then study the
restriction of the permutation to blocks. Evidently, this step-by-step
approach is easiest to implement if one uses the most basic, and,
therefore, the smallest steps. Thus, it is natural to consider all
possible block structures and navigate among them moving toward larger
and larger periods of blocks while making the smallest possible steps.

\begin{definition}[Renormalization towers]\label{d:decotow}
Let $p_1<\dots <p_s$ be periods of all possible block structures on
$\pi$. Call the finite string $(p_1/1,$ $p_2/p_1,$ $\dots,$
$p_s/p_{s-1}, N/p_s)$ the \emph{renormalization tower of $\pi$} and
denote it by $\dtp(\pi)$. Usually we will call a renormalization tower
just a \emph{tower}, or, if we want to distinguish it from an infinite
tower (see Definition~\ref{d:inf-tow}), a \emph{finite tower}. Call the
cardinality $s+1$ of the tower of $\pi$ its \emph{height}. For
consistency, if $\pi$ has no block structure, we define its tower as
$(N)$. While formally the tower of the permutation on $\{1\}$ should be
$(1)$ (so the number $1$ appears there, unlike in other towers) or
$\emptyset$, we will ignore this exception and pretend that the
singleton of a fixed point is not a cycle.
\end{definition}

\begin{figure}
\begin{center}
\includegraphics[width=120truemm]{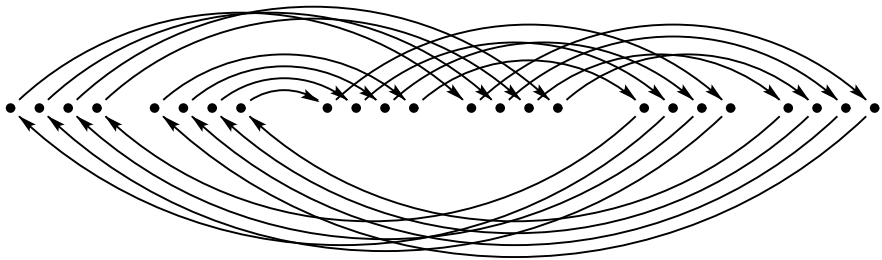}
\caption{A cycle of period 24 with tower $(3,2,4)$.}\label{period24}
\end{center}
\end{figure}

Observe that by definition no number in the tower of
$\pi$ equals $1$ (except the trivial case of $N=1$, see
Definition~\ref{d:decotow}). The concept of
the tower extends that of the period of a cyclic
permutation and reflects the structure of a cyclic permutation in a
more detailed way than the period itself. It combines both the
combinatorics and the order among points, but can be viewed as
\emph{numeric} in its nature. By this we mean that even though the
tower of a permutation can be a string a several
numbers, its height does not necessarily increase to infinity with
period. In fact, the tower of a permutation with no
block structure of \emph{any} period, however big, is $1$. This
distinguishes towers from permutations themselves.

The concept of renormalization plays a prominent role in smooth
dynamics. In a nutshell, it means that under some circumstances one
can take a specific region in the space, consider the first return map
on this region, normalize the size of the region to the original size
of the space, and discover that it is a map of the same type as the
original map. In the majority of cases the region in question is
itself periodic. This process can be sometimes repeated infinitely
many times, leading to regions of smaller and smaller sizes and higher
and higher periods. In that case in the end one gets so-called
\emph{infinitely renormalizable limit sets} (the corresponding maps
are also called \emph{infinitely renormalizable}), and metric
properties describing how smaller periodic regions relate to greater
periodic regions in terms of size, are of utter importance for one's
understanding of the smooth dynamical system in question. Our approach
is necessarily combinatorial and may be viewed as a discrete version
of the above described infinite renormalization process.

In this paper we consider towers as types of cycles,
and, according to the approach discussed earlier, aim at giving a full
description of sets of towers of cycles of continuous
interval maps. Similar to other cases (for example, to the case of
periods of cycles), in order to do so we have to study the problem of
the coexistence of towers. More precisely, we want to
characterize the forcing relation among them.

\begin{definition}[Tower forcing]\label{d:force-tow}
Let $\mathcal M$ and $\mathcal N$ be two finite strings of integers
each of which is greater than $1$. Suppose that every continuous
interval map that has a cycle with tower $\mathcal M$ has a cycle with
tower $\mathcal N$. Then we say that the
tower $\mathcal M$ \emph{forces} the
tower $\mathcal N$.
\end{definition}

Evidently, in the above definition, one can replace ``every continuous
interval map that has a cycle with tower $\mathcal M$
has a cycle with tower $\mathcal N$'' by ``every pattern
with tower $\mathcal M$ forces a pattern with tower
$\mathcal N$.''

Notice that it is not at all clear if the notion of forcing is
meaningful and provides for a transparent order among
towers. In this paper we solve this problem, show that the order
corresponding to the forcing relation among towers is linear, and
provide its full description.

Namely, in a recent paper \cite{bm18} the following order among natural
numbers was defined:
\begin{equation}%\label{star}
4\gg 6\gg 3\gg \dots \gg 4n\gg 4n+2\gg 2n+1\gg\dots \gg 2\gg 1%\tag{$*$}
\end{equation}
Here we understand the order $\gg$ in the strict sense. In other
words, $k\gg k$ is not true for integers $k$. We will call this order
the \emph{nbs order} (the term comes from the acronym ``no block
structure'' and will be justified later on).

The nbs order covers all positive integers. All positive integers
except for $1$ and $2$ are ordered in a rather transparent fashion.
Namely, first we order all even numbers in the increasing way. Then we
insert odd numbers $2n+1$ between even numbers $4n+2$ and $4n+4$. At
the end we put the missing so far numbers $2$ and $1$.

\begin{definition}[$\gg$-tail]\label{d:gg-tail}
A set $A$ of numbers is said to be a \emph{$\gg$-tail} if for any
number $m\in A$ and any number $n$ with $m\gg n$ the set $A$ must
contain $n$.
\end{definition}

Clearly, the structure of a $\gg$-tail can be described explicitly.
Namely, given a $\gg$-tail $A$, choose the $\gg$-greatest number $n$
of all numbers in $A$. It is easy to see that the $\gg$-greatest
element of $A$ always exists. Hence $A$ is the set $N(n)$ of all
numbers $x$ such that $x=n$ or $n\gg x$.

\begin{definition}[Tower]\label{d:towers-abstr}
A finite string of integers larger than $1$ is said to be a \emph{tower}.
\end{definition}

The order we actually want to introduce for towers is the
\emph{lexicographic extension} of $\gg$ onto the family of towers. Let
\[
\mathcal N=(n_1,n_2 \dots, n_k),\ \ \ \mathcal M = (m_1,m_2 \dots,
m_l)
\]
be two towers (see~\cite{Tol}). Append each of them by infinite
strings of $1$s and denote these \emph{canonical extensions} by
$\mathcal N'$ and $\mathcal M'$. Observe that in any canonical
extension constructed above the number $1$ does not show in the
beginning for some time, but once it shows, the tail of the tower must
consist of $1$s only. Let $s$ be the first place at which $\mathcal
N'$ and $\mathcal M'$ are different. Then we write $\mathcal N\gg
\mathcal M$ if $n_s\gg m_s$. We keep the same notation $\gg$ for the
lexicographic extension of $\gg$, as no confusion arises. Evidently,
the family of all towers with $\gg$-order is linearly ordered,
as any two towers are comparable in the sense of $\gg$-order.

Before we state our main theorem, we need to introduce additional
notions (similar to $\sh(2^\infty)$ in the Sharkovsky ordering).

\begin{definition}[$\gg$-tail of towers]\label{d:ggl-tail}
A set $A$ of towers is said to be a \emph{$\gg$-tail} if for any tower
$\M\in A$ and any tower $\N$ with $\M\gg \N$ the set $A$ must contain
$\N$.
\end{definition}

It is easy to give a direct and explicit description of all possible
$\gg$-tails for towers (based exclusively upon properties of nbs
order). Namely, observe that all towers directly generated by cycles
have no digits $1$ in them (except $(1)$; this is an exception that we
will mainly ignore later).

\begin{definition}[Infinite tower]\label{d:inf-tow}
An \emph{infinite tower} is an infinite string $\N$ of positive
integers $(m_0,$ $m_1,$ $\dots)$ such that either (a) $m_i>1$ for
every $i$, or (b) there exists minimal $j$ such that $m_i>1$ for each
$i<j$ and $m_i=1$ for every $i\ge j$. In case (b) we identify the
infinite tower $\M=(m_0, \dots)$ with finite tower of all initial
numbers $(m_0, m_1, \dots, m_{j-1})$ of $\M$ not equal to $1$. The
relation $\gg$ extends onto all infinite towers lexicographically.
\end{definition}

In particular, when we write $\M\gg \N$ for an infinite tower $\M$ and
a finite tower $\N$, we mean that $\M\gg \N'$ in the sense of
Definition~\ref{d:inf-tow} (here the infinite tower $\N'$ is the
canonical extension of the tower $\N$).

\begin{definition}[$\Tow(\M)$]\label{d:tow-set}
Given an infinite tower $\M$, the set $\Tow(\M)$ is defined as the set
of all finite towers $\N$ such that $\M\gg \N$, together with the finite
tower $\widetilde \M$ in case $\M$ is the canonical extension of
$\widetilde \M$.
\end{definition}

Observe that infinite towers of type (1) from
Definition~\ref{d:inf-tow} play here similar role as $2^\infty$ for
the Sharkovsky ordering.

\begin{lemma}\label{towertails}
Every $\gg$-tail of towers is of the form $\Tow(\N)$ for some infinite
tower $\N$. Conversely, for every infinite tower $\N$ the set
$\Tow(\N)$ is a $\gg$-tail of towers.
\end{lemma}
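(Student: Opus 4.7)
The converse direction reduces to transitivity of the lexicographic order. Given an infinite tower $\N$ and a finite tower $\M\in\Tow(\N)$ with $\M\gg\M'$, I would compare canonical extensions: either $\N\gg\M$ as infinite strings, in which case transitivity of the lexicographic $\gg$ yields $\N\gg\M'$; or $\N$ is the canonical extension of $\M=\widetilde\N$, in which case $\widetilde\N\gg\M'$ directly gives $\N\gg(\M')'$. Either way $\M'\in\Tow(\N)$.

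For the forward direction, let $A$ be a nonempty $\gg$-tail of towers. The easy subcase is when $A$ has a $\gg$-greatest element $\widetilde\M$: I would take $\N$ to be its canonical extension, so $\Tow(\N)=\{\widetilde\M\}\cup\{\M':\widetilde\M\gg\M'\}$, and this coincides with $A$ by the maximality of $\widetilde\M$ (one inclusion) and the tail property (the other).

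The substantive case is when $A$ has no $\gg$-greatest element; here I must build an infinite tower $\N=(n_0,n_1,\dots)$ of type (a). I would construct $\N$ greedily: at step $i$, assume inductively that some tower of $A$ has canonical extension beginning with $(n_0,\dots,n_{i-1})$, and define $n_i$ to be the $\gg$-greatest $i$-th digit appearing in canonical extensions of such towers. Such a maximum exists because the nbs order on $\mathbb{N}$, read from the top, is a well-order of type $\omega+2$. The main obstacle is proving $n_i>1$ for every $i$. Suppose instead $n_i=1$; then every tower in $A$ extending the prefix $(n_0,\dots,n_{i-1})$ must have length exactly $i$, so the prefix itself is the unique such tower in $A$. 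Any other element $\M'$ of $A$ then first disagrees with this prefix at some earlier position $j$, and by the greedy choice of $n_j$ one has $n_j\gg m'_j$; so the canonical extension of $(n_0,\dots,n_{i-1})$ dominates that of every other element of $A$, making $(n_0,\dots,n_{i-1})$ the $\gg$-greatest element of $A$, contrary to hypothesis.

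To finish I would verify $A=\Tow(\N)$. For $A\subseteq\Tow(\N)$, any $\M'\in A$ either first disagrees with $\N$ at some position $j$ within its support, where $n_j\gg m'_j$ by the maximality at step $j$, or agrees with $\N$ throughout its support, in which case at position $j=\mathrm{length}(\M')$ the tower $\N$ has digit $n_j>1$ while $\M'$'s canonical extension has digit $1$; either way $\N\gg\M'$. For $\Tow(\N)\subseteq A$, given $\M'$ with $\N\gg\M'$, I would take the first differing position $j$ and invoke the construction to produce $\M''\in A$ whose canonical extension starts with $(n_0,\dots,n_j)$; then $\M''\gg\M'$ holds at coordinate $j$, and the tail property places $\M'$ into $A$.
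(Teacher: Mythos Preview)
Your proof is correct and follows essentially the same greedy/inductive construction as the paper: build the infinite tower digit by digit, at each step choosing the $\gg$-largest possible entry witnessed by some element of $A$. The paper compresses the whole argument into three sentences and does not separate the ``$A$ has a maximum'' case from the other, nor does it spell out the two inclusions $A\subseteq\Tow(\N)$ and $\Tow(\N)\subseteq A$; you have supplied those verifications explicitly, along with the observation that the nbs order read from the top is a well-order (so the greedy maximum exists at each stage), but the underlying strategy is identical.
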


\begin{proof}
The second part of the lemma is obvious, since the order $\gg$ is the
same in the set of finite and infinite towers. To prove the first
part, assume that $\mathcal A$ is a $\gg$-tail of towers. Then we
construct the sequence $(m_j)$ by induction. If $m_i$ for $i<j$ are
defined, then $m_j$ is the $\gg$-largest number such that there is a
tower in $\mathcal A$ starting with $(m_0,\dots,m_{j-1},m_j)$ (and $1$
if such number does not exist). Then for the infinite tower
$\N=(m_0,m_1,\dots)$ we have ${\mathcal A}=\Tow(N)$.
\end{proof}

Now we can state the main result of the paper.

\begin{thmM}
If $\mathcal N\gg \mathcal M$ and a continuous interval map $f$ has a
cycle with tower $\mathcal N$ then it has a cycle with tower $\mathcal
M$, and so there exists an infinite tower $\mathcal K$ such that the
set of all (finite) towers of cycles of $f$ is $\Tow(\mathcal K)$.
Conversely, if $\mathcal K$ is an infinite tower, then there exists a
continuous interval map $g$ such that the set of all (finite) towers
of cycles of $g$ coincides with $\Tow(\mathcal K)$.
\end{thmM}

The paper is organized as follows. In Section~\ref{s:prelim} we go
over preliminary information, including the tools related to the
forcing relation among patterns and results of the recent paper
\cite{bm18}. In Section~\ref{s:basic} we establish a few basic
facts concerning towers. In Section~\ref{s:unimodals}
we study unimodal cycles and patterns; they play a significant role in
the proof of the realization part of Main Theorem.
Section~\ref{s:main} is devoted to the proof of Main Theorem as well
as illustrating it with some applications.

\section{Preliminaries}\label{s:prelim}

Here we introduce basic notions and theorems that will be used in the
next parts of the paper.

\begin{definition}[Forcing relation among patterns]\label{d:patterns}
If $A$ and $B$ are patterns such that any continuous interval map with
a cycle of pattern $A$ has a cycle of pattern $B$, then one says that
$A$ \emph{forces} $B$. This is a partial order; we might talk of
forcing among cycles too.
\end{definition}

A nice description of patterns forced by a given pattern can be given
if we rely upon the notion of a \emph{$P$-linear map}. Observe that
one can talk of cycles on $\mathbb R$ or $I$ even if the map is not
defined outside the cycle.

\begin{definition}[$P$-linear maps and $P$-basic intervals]\label{d:plin}
Let $P\subset \mathbb R$ be a finite set. Set $I=[\min P,$ $\max P]$.
The closure of a component of $I\sm P$ is said to be a
\emph{$P$-basic} interval. If a map $f:P\to P$ is given, then the
\emph{$P$-linear map} $F:I\to I$ is defined as the continuous
extension of $f$, linear on each $P$-basic interval. Similarly, a map
$g:I\to I$ is said to be \emph{$P$-linear} if $g$ is monotone on each
$P$-basic interval.
\end{definition}

Mostly, one uses $P$-linear maps in dealing with cycles.

\begin{theorem}[\cite{alm}]\label{t:plin}
If $P$ is a cycle of pattern $A$ and $f$ is a $P$-linear map, then the
patterns of cycles of $f$ are exactly the patterns forced by $A$.
\end{theorem}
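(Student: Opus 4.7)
The plan is to dispatch the easy converse direction via transitivity of $\gg$, and for the forward direction to construct $\N$ inductively and verify both inclusions $\mathcal A \subseteq \Tow(\N)$ and $\Tow(\N) \subseteq \mathcal A$.

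For the converse, fix an infinite tower $\N$, let $\M \in \Tow(\N)$, and suppose $\M \gg \M^*$ for some finite tower $\M^*$. Either $\N \gg \M$ lexicographically or $\N$ equals the canonical extension of $\M$; in either case, transitivity of the lexicographic extension of the nbs order yields $\N \gg \M^*$, whence $\M^* \in \Tow(\N)$.

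For the forward direction, let $\mathcal A$ be a $\gg$-tail and construct $\N = (m_0, m_1, \dots)$ inductively: given $m_0, \dots, m_{j-1}$, set $m_j$ to be the $\gg$-largest integer $m > 1$ such that some tower in $\mathcal A$ begins with $(m_0, \dots, m_{j-1}, m)$, and $m_j = 1$ if no such $m$ exists. The $\gg$-largest element exists because the nbs order, read from top to bottom, is a well-order of $\mathbb N$ (having order type $\omega + 2$), so every nonempty subset of $\mathbb N$ has a $\gg$-maximum. Moreover $\N$ is a genuine infinite tower in the sense of Definition~\ref{d:inf-tow}: if $m_{j-1} = 1$, then the prefix $(m_0, \dots, m_{j-1})$ contains a $1$ and cannot be a prefix of any finite tower in $\mathcal A$ (whose entries all exceed $1$), forcing $m_j = 1$ thereafter.

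It remains to verify $\mathcal A = \Tow(\N)$. For $\mathcal A \subseteq \Tow(\N)$, given $\M \in \mathcal A$ of length $k$, compare its canonical extension with $\N$ entry by entry. If they agree everywhere, then $\N$ is the canonical extension of $\M$, so $\M \in \Tow(\N)$; otherwise, at the first index $s$ where they disagree, the tower $\M$ itself witnesses that the $s$-th entry of its canonical extension is a valid candidate in the inductive choice of $m_s$, and $\gg$-maximality forces $m_s$ to be $\gg$-larger than that entry, giving $\N \gg \M$. For $\Tow(\N) \subseteq \mathcal A$, let $\M \in \Tow(\N)$. If $\N$ is the canonical extension of $\M$ of length $k$, then the inductive choice supplies a tower in $\mathcal A$ beginning with $(m_0, \dots, m_{k-1})$, and $m_k = 1$ precludes any strictly longer extension in $\mathcal A$, so this tower equals $\M$ itself. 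If instead $\N \gg \M$ with first disagreement at $s$, the witness tower $\M^* \in \mathcal A$ produced at stage $s$ begins with $(m_0, \dots, m_{s-1}, m_s)$ and satisfies $\M^* \gg \M$, so the $\gg$-tail property yields $\M \in \mathcal A$. The main delicate step is the boundary case at position $s$ equal to the length of $\M$, where $\M$'s canonical extension contributes a $1$ while $\N$ may carry a value $>1$; the argument still goes through because every integer $>1$ is $\gg$-larger than $1$ in the nbs order.
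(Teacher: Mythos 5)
Your proposal does not address the statement it was supposed to prove. The statement is Theorem~\ref{t:plin}: if $P$ is a cycle of pattern $A$ and $f$ is the $P$-linear map, then the patterns of cycles of $f$ are exactly the patterns forced by $A$. This is a statement in combinatorial dynamics about the forcing relation among patterns (Definition~\ref{d:patterns}) and $P$-linear maps (Definition~\ref{d:plin}); in the paper it is quoted from \cite{alm} and not proved. What you have written instead is a proof of Lemma~\ref{towertails}, the purely order-theoretic fact that every $\gg$-tail of towers has the form $\Tow(\N)$ for some infinite tower $\N$ and conversely. Nothing in your argument mentions interval maps, cycles, patterns, or forcing, so it cannot establish Theorem~\ref{t:plin}; the overlap in vocabulary (``tower'', ``tail'') is coincidental to the actual content of the theorem.

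For the record, a proof of Theorem~\ref{t:plin} has two halves. The easy half is that every pattern forced by $A$ occurs among the cycles of $f$: the $P$-linear map $f$ exhibits $A$ (via the cycle $P$ itself), so by the definition of forcing it must exhibit every pattern forced by $A$. The substantive half is the converse: every pattern exhibited by $f$ is forced by $A$, i.e., \emph{any} continuous map $g$ with a cycle $Q$ of pattern $A$ exhibits every pattern that $f$ does. This is proved with the machinery of loops of $P$-basic intervals (Definition~\ref{d:loopsi} and Lemma~\ref{l:p-in-loops}): each cycle of $f$ determines a loop of basic intervals, the corresponding loop of $Q$-basic intervals is admissible for $g$ because $g$ moves the points of $Q$ the same way $f$ moves the points of $P$, and one then checks that the periodic orbit of $g$ produced by Lemma~\ref{l:p-in-loops} from this loop actually has the intended period and pattern (this last verification is where the real work lies). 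Your inductive construction of an infinite tower and the well-orderedness of the nbs order play no role here.
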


Properties of patterns can be stated in terms of $P$-linear maps. A
map $F:A\to A$ is called \emph{topologically exact} if for any
nonempty open set $U\subset A$ there is $n$ such that $F^n(U)=A$. We
will often write ``basic intervals'' meaning ``$P$-basic intervals''
if it is clear which cycle (or finite set) $P$ we mean.

\begin{proposition}\label{exact}
A cycle $P$ of period $n>2$ has no block structure if and only if the
$P$-linear map $f$ is topologically exact.
\end{proposition}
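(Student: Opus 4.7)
The forward direction is a direct construction. Suppose the cyclic permutation $\pi$ induced by $P$ has a block structure with blocks $B_1 < B_2 < \dots < B_k$ in left-to-right order ($1 < k < n$). Since $\pi$ is a single $n$-cycle permuting the blocks, the induced permutation $\sigma$ of $\{1,\dots,k\}$ (defined by $\pi(B_i) = B_{\sigma(i)}$) is a single $k$-cycle and each block has common size $n/k \ge 2$. Setting $J_i = [\min B_i, \max B_i]$ gives pairwise disjoint nondegenerate intervals with $U := \bigcup_i J_i \subsetneq I$. On every $P$-basic subinterval of $J_i$ the endpoints lie in $B_i$ and map into $B_{\sigma(i)} \subseteq J_{\sigma(i)}$, so $P$-linearity yields $f(J_i) \subseteq J_{\sigma(i)}$ and $U$ is forward-invariant. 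Any nonempty open $V \subseteq \mathrm{int}(J_i)$ then satisfies $f^m(V) \subseteq J_{\sigma^m(i)} \subsetneq I$ for all $m$, so $f$ is not topologically exact.

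For the converse, assume $P$ has no block structure. I would work with the Markov graph $\Gamma$ whose vertices are the $P$-basic intervals and whose edges are $K \to K'$ whenever $f(K) \supseteq K'$; since $f$ is $P$-linear, $f(K)$ equals exactly the union of the basic intervals $K'$ with $K \to K'$. My plan has three stages: (a) $\Gamma$ is strongly connected and aperiodic; (b) for every basic interval $K$ some iterate satisfies $f^{m(K)}(K) = I$; (c) every nondegenerate interval $W \subseteq I$ has some iterate $f^N(W)$ containing a basic interval. Once (a)--(c) are in hand, for open $W$ we obtain $f^{N+m(K)}(W) \supseteq f^{m(K)}(K) = I$, completing the proof.

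For (a), if $\Gamma$ fails to be strongly connected, the vertices reachable from some $K_0$ form a proper forward-closed set $R$ and $V := \bigcup_{K \in R} K$ satisfies $f(V) \subseteq V$; exploiting bijectivity of $\pi = f|_P$ on a single $n$-cycle, a careful analysis of the components of $V$ and of $I \setminus V$ extracts a partition of $P$ into equal-sized consecutive segments permuted by $\pi$, contradicting the no-block-structure hypothesis. A parallel contrapositive handles aperiodicity. Step (b) then follows from the standard fact that a strongly connected aperiodic finite directed graph admits an $m$ with a directed path of length $m$ between every ordered pair of vertices. For (c), if $f^j(W)$ never contained a basic interval, each $f^j(W)$ would lie inside at most two adjacent basic intervals, and a relative-scale argument using $P$-linearity would force the sequence of ambient basic intervals to trace a forward-closed subgraph of $\Gamma$ in which each vertex has a unique outgoing edge; by (a) this would be all of $\Gamma$, and this in turn would force $\pi$ to send $P$-adjacent pairs to $P$-adjacent pairs, hence $\pi$ to be monotone on $P$, which is incompatible with $\pi$ being a single $n$-cycle on $n > 2$ points.

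\textbf{Main obstacle.} The step demanding the most care is (a): turning a non-strongly-connected Markov graph into a genuine block structure of $P$. The complement of a forward-invariant set is not generally forward-invariant, so the blocks cannot just be read off as the components of $V$; one must use bijectivity of $\pi$ on the single cycle $P$ together with the fact that $\pi$-permuted families of segments in a cyclic permutation are forced to have equal size, in order to upgrade the SCC decomposition of $\Gamma$ into a bona fide block structure.
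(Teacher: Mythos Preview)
Your forward direction is correct and matches the paper's. The Markov-graph framework for the converse is a legitimate alternative to the paper's direct approach, but you have the difficulties inverted. Step (a) is not the obstacle: if $R$ is a proper nonempty forward-closed set of basic intervals and $V=\bigcup_{K\in R}K$, then $V\cap P$ is nonempty and $\pi$-invariant, hence equals $P$ since $\pi$ is a single $n$-cycle; it follows that both extremal basic intervals lie in $R$ and no two omitted intervals are adjacent, so the components of $V$ directly yield a block structure with between $2$ and $n-1$ blocks. Your worry about reading off blocks from $V$ is therefore misplaced. The paper dispatches this in one line by noting that for any basic $J$ the union $\bigcup_{k\ge 0}f^k(J)$ must be connected, else its components intersected with $P$ give blocks. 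Aperiodicity is also immediate once you observe that some basic interval $[a,b]$ has $\pi(a)$ and $\pi(b)$ on opposite sides of $[a,b]$, giving a loop in $\Gamma$; your ``parallel contrapositive'' is unnecessary.

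The real work is step (c), and here your sketch has a gap. Your relative-scale idea is sound when each $f^j(W)$ lies in a \emph{single} basic interval $K_j$: the ratio $|f^j(W)|/|K_j|$ is non-decreasing and strictly increases (by a factor bounded away from $1$) whenever $K_j$ has out-degree $\ge 2$ in $\Gamma$, so eventually the path visits only out-degree-$1$ vertices of $\Gamma$; these form a forward-closed set, hence all of $\Gamma$ by (a), forcing $\pi$ to preserve adjacency as you say. But this does not cover the case where some $f^j(W)$ straddles a point of $P$, and you cannot simply pass to an open subinterval of $W$ whose iterates all avoid $P$ --- the grand preimage of $P$ will typically be dense. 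The paper spends most of its proof on precisely this issue: it first derives an \emph{expanding property} from the analogue of your (b) --- if $x$ and $f^m(x)$ both lie in the interior of the same basic interval then $|(f^m)'(x)|>1$ --- and then, after reducing to the case where $W$ has a point of $P$ as an endpoint, uses a pigeonhole argument on the one-sided neighborhoods of points of $P$ together with the expanding property to force some iterate of $W$ to swallow a basic interval.
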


\begin{proof}
If $P$ has a block structure then any basic interval $I$ in the convex
hull of a block has images contained in the convex hulls of blocks, and
so $f$ is not topologically exact.

Now, assume that $P$ has no block structure. There must be a basic
interval $I=[a, b]$ such that $a$ and $b$ are mapped in opposite
directions. Then consecutive images of $I$ keep growing until some
image of $I$ covers $P$. Let $J$ be any basic interval and consider
the union of the intervals $f^k(J)$ over $k=0,1,2,\dots$. If it is not
connected then intersections of its components with $P$ form
non-trivial blocks, a contradiction. Hence the images of $J$
eventually cover $I$, and then the entire $P$.

It follows that if $x$ and $f^m(x)$ (for some $m>0$) both belong to the
interior of a basic interval $J$, then $|(f^m)'(x)|>1$ (we will refer
to this as an \emph{expanding property}). To see this, take the maximal
open interval $K\subset J$ containing $x$ such that $f^i(K)\cap
P=\emptyset$ for all $i=0,1,\dots,m$. Then $f^m|_J$ is monotone, and
for each endpoint $y$ of $K$ there has to be $i\le m$ such that
$f^i(y)\in P$, as
otherwise $K$ can be slightly enlarged while still fulfilling the
property that defines it. Hence $f^m(\overline{K})=J$, and
so if $|(f^m)'(x)|\le 1$ then actually $|(f^m)'(x)|=1$ and $\overline{K}=J$, a
contradiction with the fact that an eventual image of $J$ covers $P$.

Let now $K$ be an open interval. If there is no $k$ such that $f^k(K)$
contains a point of $P$, then there is a basic interval $J$ such that
the set $Z$ of those integers $i\ge 0$ for which $f^i(K)\subset J$ is
infinite. Let $V$ be the union of the intervals $f^i(K)$ over $i\in Z$.
By the expanding property, the lengths of those intervals $f^i(K)$ are
larger than or equal to the length of $K$. Therefore the set $L$ has
finitely many components. However, for the longest component, say $H$,
there is $m>0$ such that $f^m(H)\subset L$ and the length of $f^m(H)$
is larger than the length of $H$, a contradiction.
This shows that we may assume that $K$ contains a point of
$P$. By shortening $K$, we may assume that it is a short interval
containing a point of $P$ as its endpoint.

By taking $2n$ first images of such $K$ we get $2n$ short intervals
containing a point of $P$ as an endpoint, so one of them has to contain
some other one. By the expanding property, they have different lengths,
and the longer one is the image of the shorter one under $f^j$ for some
$j>0$. Itereating f$^j$ further, we get longer and longer intervals,
until some image of $K$ contains a basic interval. Now we see that some
further image of $K$ is equal to the convex hull of $P$, as desired.
\end{proof}

\begin{definition}[Loops of (basic) intervals]\label{d:loopsi}
Suppose that $f$ is an interval map and there are intervals $I_0,$
$\dots,$ $I_{n-1}$ such that $I_1\subset f(I_0),$ $I_2\subset f(I_1),$
$\dots,$ $I_0\subset f(I_{n-1})$. Then a finite string $L=I_0\to
I_1\to \dots \to I_{n-1}\to I_0$ is said to be an \emph{loop of
  intervals}. If intervals $I_0,$ $\dots,$ $I_{n-1}$ are $P$-basic for
some finite invariant set (e.g., cycle) $P$, then $L$ is said to be an
\emph{loop of ($P$-)basic intervals.}
\end{definition}

The next lemma helps one find various periodic points.

\begin{lemma}[see, e.g., \cite{block80}]\label{l:p-in-loops} If $L=I_0\to
I_1\to \dots \to I_{n-1}\to I_0$ is a loop of intervals, then there
exists a point $x$ such that $f^j(x)\in I_j$, $0\le j\le n-1$, while
$f^n(x)=x$.
\end{lemma}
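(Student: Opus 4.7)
The plan is to produce the desired periodic point by constructing, inside $I_0$, a closed subinterval $J \subseteq I_0$ whose $n$-th image under $f$ covers $I_0$ while all intermediate images stay in the appropriate $I_j$; then a fixed point of $f^n$ in $J$ will automatically have the required itinerary. The whole argument rests on one elementary covering lemma: if $g\colon A\to\R$ is continuous on a closed interval $A$ and $g(A)\supseteq K$ for some closed interval $K$, then there is a closed subinterval $A'\subseteq A$ with $g(A')=K$. This follows from the intermediate value theorem by taking a connected component of $g^{-1}(K)\cap A$ that still maps onto $K$.

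With that tool in hand, I would run a backward induction along the loop. Set $J_n=I_0$. Since $J_n=I_0\subseteq f(I_{n-1})$, choose $J_{n-1}\subseteq I_{n-1}$ closed with $f(J_{n-1})=J_n$. Inductively, once $J_{k+1}\subseteq I_{k+1}$ has been chosen so that $J_{k+1}\subseteq f(I_k)$ (which holds because $I_{k+1}\subseteq f(I_k)$), apply the covering lemma to pick a closed $J_k\subseteq I_k$ with $f(J_k)=J_{k+1}$. Set $J:=J_0\subseteq I_0$. By construction $f^j(J)=J_j\subseteq I_j$ for every $0\le j\le n-1$ and $f^n(J)=I_0\supseteq J$.

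It remains to find $x\in J$ with $f^n(x)=x$. Write $J=[a,b]$ and consider the continuous function $\varphi(t)=f^n(t)-t$ on $J$. Because $f^n(J)=I_0\supseteq[a,b]$, there exist $u,v\in J$ with $f^n(u)=\min I_0\le a$ and $f^n(v)=\max I_0\ge b$, so $\varphi(u)\le 0\le\varphi(v)$; the intermediate value theorem yields $x\in J$ with $\varphi(x)=0$. This $x$ satisfies $f^j(x)\in I_j$ for $0\le j\le n-1$ and $f^n(x)=x$, as required.

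The only real subtlety is the covering lemma underlying the backward construction, i.e.\ making sure that at each step one genuinely gets a closed subinterval mapped \emph{onto} $J_{k+1}$; this is just a connectedness and IVT argument, so no step is truly an obstacle. This is why the result is standard and the authors refer to \cite{block80}.
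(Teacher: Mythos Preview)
Your proof is correct and is exactly the standard argument for this well-known lemma. The paper does not supply its own proof; it merely states the result with a reference to~\cite{block80}, so there is nothing further to compare.
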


The orbit of $x$ from Lemma~\ref{l:p-in-loops} is said to
\emph{correspond to the loop $L$.}

\begin{definition}[\v Stefan pattern]\label{d:stefan}
Consider the cyclic permutation $\sigma:\{1, 2,$ $\dots,$ $2n+1\}\to
\{1, 2, \dots, 2n+1\}$ defined as follows:
\begin{itemize}
\item $\sigma(1)=n+1$,
\item $\sigma(i)=2n+3-i$,\ \ if\ \ $2\le i\le n+1$,
\item $\sigma(i)=2n+2-i$,\ \ if\ \ $n+2\le i\le 2n+2$,
\end{itemize}
see Figure~\ref{Stefan}. Then the pattern of this cyclic permutation
is called the \emph{\v Stefan} pattern. The interval $[n+1, n+2]$ is
then called the \emph{central} interval of the cycle $\{1, \dots,
2n+1\}$. Moreover, any cycle $P$ of this pattern is said to be a
\emph{\v Stefan} cycle, and the $P$-basic interval corresponding to
the central interval of the pattern is also called the \emph{central}
interval of this cycle.
\end{definition}

\begin{figure}
\begin{center}
\includegraphics[width=120truemm]{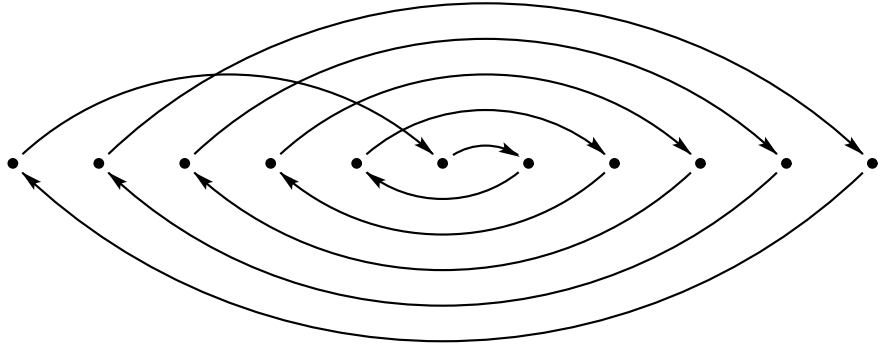}
\caption{A \v Stefan cycle of period 11.}\label{Stefan}
\end{center}
\end{figure}

Let $f$ be the $P$-linear map defined by the \v Stefan cyclic
permutation from Definition~\ref{d:stefan}. Then it has the following
properties. Take the central interval $I=[n+1, n+2]$ of $P$. Then
$f(I)=[n, n+2], f^2(I)=[n, n+3]$ etc. In other words, with each
application of $f$ the image of the central interval $I$ grows, adding
one new $P$-basic interval to the left or to the right of $I$,
alternately. In particular, $f^j(I)$ contains $j+2$ points of $P$.
This lasts until on the $2n-1$-st step we have $f^{2n-1}=[1, 2n+1]$.
The slow rate of growth of $I$ is ``responsible'' for the fact that \v
Stefan pattern is the forcing-weakest among all patterns of the same
(odd) period.

\begin{theorem}[\cite{ste77}, see also Lemma 2.16
    of~\cite{alm}]\label{t:stefan}
A periodic pattern of period $2n+1$ forces the \v Stefan pattern of
period $2n+1$. A periodic pattern of period $2n+1$ which is not \v
Stefan forces the \v Stefan pattern of period $2n-1$.
\end{theorem}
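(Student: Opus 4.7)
The plan is to use the loop-of-intervals machinery (Lemma~\ref{l:p-in-loops}) applied to the Markov graph of the $P$-linear map $f$ of a cycle $P$ of odd period $2n+1\geq 3$. Since $\min P$ is moved strictly up and $\max P$ strictly down, $f$ has a fixed point $q$ in the convex hull of $P$, and because $P$ consists of points of period $>1$, the point $q$ lies in the interior of a unique basic interval $K=[a,b]$ with $a,b\in P$. Linearity of $f$ on $K$ together with $f(q)=q$ forces $f(a)$ and $f(b)$ to lie on opposite sides of $q$; in particular $f(K)\supsetneq K$, yielding a self-loop $K\to K$ in the Markov graph.

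By induction on $j$, each iterate $f^j(K)$ is an interval whose endpoints lie in $P$, and the nested sequence $K\subset f(K)\subset f^2(K)\subset\dots$ strictly increases until it fills the convex hull of $P$; within at most $2n-1$ steps all $2n$ basic intervals are covered. I would track the basic interval $J_k$ newly attached at step $k$ and argue that in the \emph{minimal growth} scenario — exactly one new basic interval per step, on alternating sides — the chain satisfies $f(J_k)\supset J_{k+1}$ for $1\le k\le 2n-2$. Combined with the obvious coverings $f(K)\supset K$, $f(K)\supset J_1$ and $f(J_{2n-1})\supset K$, this yields the loop
\[
K\to K\to J_1\to J_2\to\dots\to J_{2n-1}\to K
\]
of length $2n+1$ in the Markov graph, whose associated cyclic permutation on $\{K,J_1,\dots,J_{2n-1}\}$ is precisely the \v Stefan permutation of Definition~\ref{d:stefan}. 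Lemma~\ref{l:p-in-loops} then produces a periodic orbit tracing this loop; the pairwise distinctness of the basic intervals visited forces the period to be exactly $2n+1$ and the pattern to be \v Stefan, handling Part~1 when $P$ itself is \v Stefan.

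If the pattern of $P$ is not \v Stefan, the minimal growth above must fail at some step $j_0\le 2n-2$ — either two basic intervals are attached at once, or a second consecutive same-side attachment occurs. Either deviation supplies an extra covering relation in the Markov graph which short-circuits the loop above, producing a loop of length $2n-1$ whose induced permutation is the \v Stefan permutation of period $2n-1$; applying Lemma~\ref{l:p-in-loops} yields the cycle required by Part~2. For Part~1 in the non-\v Stefan case, one additionally exploits the self-loop $K\to K$ in combination with a carefully chosen alternating walk between $K$ and an adjacent basic interval (the existence of such a walk of the right length follows from the extra covering already found) to manufacture a loop of length $2n+1$ that still realises the \v Stefan permutation of that period.

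The hard part will be the combinatorial case analysis: verifying that minimal growth really does force $f(J_k)\supset J_{k+1}$ and the alternation of sides (hence the \v Stefan permutation rather than some other permutation of period $2n+1$), and that each type of deviation from minimal growth yields the \v Stefan pattern of the asserted period — not a proper divisor, and not some different permutation. This delicate bookkeeping is precisely what is carried out in Lemma~2.16 of~\cite{alm}, whose treatment I would follow.
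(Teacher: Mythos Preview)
The paper does not prove this theorem at all: it is quoted as a known result from~\cite{ste77} and Lemma~2.16 of~\cite{alm}, with no argument supplied. Your proposal is a sketch of precisely that classical proof, and you say as much in your final paragraph, so there is no divergence to discuss --- you are reconstructing the reference the paper cites.

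As a sketch of the classical argument your outline is sound, but a couple of points are stated more confidently than they should be. First, ``minimal growth'' (exactly one new basic interval per step) does not by itself force alternation of sides; the alternation is deduced from a parity argument using that the period is odd together with tracking which endpoint of $f^k(K)$ actually moves, and this is part of the ``delicate bookkeeping'' you defer. Second, your justification that the traced orbit has period exactly $2n+1$ appeals to ``pairwise distinctness of the basic intervals visited,'' but your own loop $K\to K\to J_1\to\cdots$ visits $K$ twice; the correct argument is that the itinerary is not periodic with any proper period, which follows because the $J_k$ are distinct from one another and from $K$. Finally, in the non-\v Stefan case your description of how a deviation ``short-circuits'' to a loop of length $2n-1$ that realises specifically the \v Stefan permutation (rather than some other cycle of a divisor period) is purely schematic. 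None of these are fatal --- they are exactly the steps handled carefully in the cited Lemma~2.16 of~\cite{alm} --- but what you have written is a plan, not yet a proof.
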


Finally, we would like to make a simple but useful remark concerning
the Shar\-kov\-sky Theorem. It claims that if a continuous interval
map $f$ has a cycle $P$ of period $n$ and $n\sha m$ then $f$ has a
cycle $Q$ of period $m$. In fact, one can add to this that $Q$ can be
chosen \emph{inside} the convex hull of $P$.

\subsection{Patterns with (no) block structure and the order among
  their periods}\label{ss:nos-order}

Let us begin by discussing simple facts concerning patterns $A$
\emph{with} block structure. It turns out that there are several
patterns trivially forced by $A$. In particular, using
Lemma~\ref{l:p-in-loops} based upon the techniques of loops of
intervals (see Definition~\ref{d:loopsi}) one can easily prove the
following well-known corollary.

\begin{corollary}\label{c:gen-by-block}
Let $P$ be a cycle of pattern $A$ and let $f$ be the $P$-linear map.
Let $X_1, \dots, X_l$ be blocks of some block structure in $P$. Then
there exists an $f$-cycle that has exactly one point in the convex
hull of each $X_i$. Moreover, the pattern $B$ of any such $f$-orbit is
well-defined and is forced by $A$.
\end{corollary}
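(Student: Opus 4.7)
My plan is to apply the loop-of-intervals lemma to the convex hulls of the blocks and then to read off the resulting pattern from the cyclic permutation $\sigma$ that $f$ induces on the blocks.

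Since a block structure partitions $P$ into consecutive segments in the linear order on $\R$, the convex hulls $J_i=\mathrm{conv}(X_i)$ are pairwise disjoint closed intervals. Relabeling so that $\sigma(i)\equiv i+1 \pmod l$, I would observe that for each $i$ there exist points $a,b\in X_i\subset J_i$ with $f(a)=\min X_{\sigma(i)}$ and $f(b)=\max X_{\sigma(i)}$; continuity of $f$ on $J_i$ together with the intermediate value theorem then yields $f(J_i)\supseteq J_{\sigma(i)}$. Thus $J_1\to J_{\sigma(1)}\to\cdots\to J_{\sigma^{l-1}(1)}\to J_1$ is a loop of intervals.

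By Lemma~\ref{l:p-in-loops} there is a point $x$ with $f^j(x)\in J_{\sigma^j(1)}$ for $0\le j\le l-1$ and $f^l(x)=x$. The $f$-period of $x$ divides $l$; if it were some proper divisor $d$, then $x\in J_1\cap J_{\sigma^d(1)}$, and disjointness of the $J_i$'s together with cyclicity of $\sigma$ would force $d=l$. Hence the orbit $Q$ of $x$ has period exactly $l$ and meets each $J_i$ in exactly one point. Note that $Q\ne P$, since $|Q|=l<N=|P|$ while every point of $P$ has $f$-period $N$.

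The pattern $B$ of $Q$ is uniquely determined: the linear order on $Q$ coincides with the linear order on the intervals $J_1,\dots,J_l$ (each carrying one point of $Q$), and $f$ sends the $Q$-point in $J_i$ to the $Q$-point in $J_{\sigma(i)}$. So $B$ depends only on $\sigma$, not on the choice of $x$. Finally, since $Q$ is a cycle of the $P$-linear map $f$, Theorem~\ref{t:plin} immediately gives that $B$ is forced by $A$. There is no real obstacle here; the main technical point is merely the verification that $f(J_i)\supseteq J_{\sigma(i)}$, which rests on the intermediate value theorem and on the fact that the extreme points of $X_{\sigma(i)}$ are $f$-images of points inside $J_i$.
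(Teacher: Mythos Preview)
Your proof is correct and follows exactly the approach the paper indicates (the paper does not spell out a proof, merely stating that the corollary is well known and follows from Lemma~\ref{l:p-in-loops} via loops of intervals). One small point worth tightening: to justify that the pattern $B$ of \emph{any} $f$-cycle meeting each $J_i$ once is the same, you implicitly use $f(J_i)\subseteq J_{\sigma(i)}$ (so that the point of $Q$ in $J_i$ must map to the point of $Q$ in $J_{\sigma(i)}$), whereas you only established the reverse inclusion via the intermediate value theorem; the needed containment follows immediately from $P$-linearity, since every $P$-basic interval inside $J_i$ has both endpoints in $X_i$ and hence maps into $J_{\sigma(i)}$.
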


We shall say that the pattern of the cycle whose existence is
established in Corollary~\ref{c:gen-by-block} is \emph{generated} by
the block structure with blocks $X_1, \dots, X_l$. We will also say
that the cycle $P$ (and the pattern $A$) from
Corollary~\ref{c:gen-by-block} have block structure \emph{over} the
cycles described in that corollary (or over the pattern $B$ of those
cycles).

An important particular case is presented in the next definition.

\begin{definition}[Doubling]\label{d:doubl}
Let $P$ is a cycle of pattern $A$ with block structure such that each
block consists of two points. Denote by $B$ the pattern generated by
this block structure. Then we say that $A$ is a \emph{doubling} of
$B$.
\end{definition}

Observe that given a pattern $B$ of period larger than 1, there are
several (more than one) patterns that are doublings of $B$. This
follows from the fact that the map on each block of $A$ from
Definition~\ref{d:doubl} may be either increasing or decreasing.

\begin{figure}
\begin{center}
\includegraphics[width=120truemm]{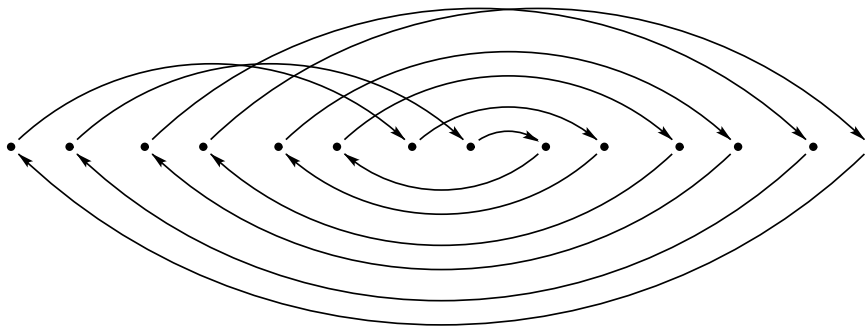}
\caption{A cycle of period 14, which is a doubling.}\label{period14d}
\end{center}
\end{figure}

\begin{figure}
\begin{center}
\includegraphics[width=120truemm]{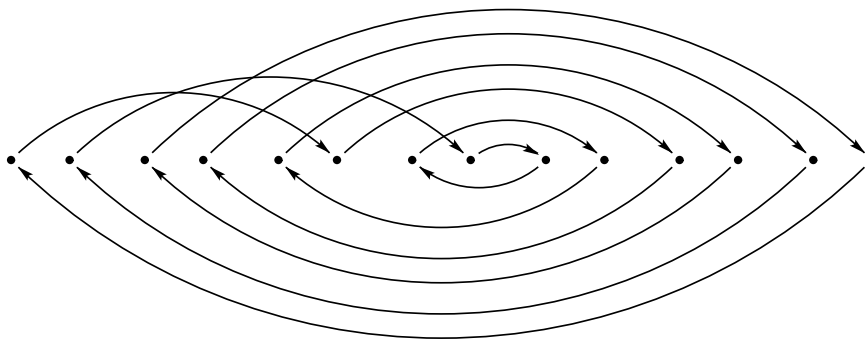}
\caption{A cycle of period 14, which is not a
  doubling.}\label{period14nd}
\end{center}
\end{figure}

The next lemma follows immediately from the fact that for a \v Stefan
cycle the image of the leftmost basic interval contains more than half
of the points of the cycle.

\begin{lemma}\label{l:stemix} \v Stefan cycles have no block structure.
\end{lemma}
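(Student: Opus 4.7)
The plan is to derive a contradiction from the very explicit form of the \v Stefan permutation, exploiting the hint preceding the lemma: that the image of the leftmost $P$-basic interval contains more than half of the points of $P$.

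Fix a \v Stefan cycle $P$ of odd period $N=2n+1$, and assume toward a contradiction that $P$ has a block structure of period $k\notin\{1,N\}$. Since $k$ divides the odd number $N$, both $k$ and the common block size $m=N/k$ are odd; and since $k$ is a proper divisor different from $N$, we have $m\ge 3$. In particular, the leftmost block is a segment of at least three consecutive points of $P$ in the natural order, so in particular it contains both endpoints of the leftmost $P$-basic interval $[1,2]$.

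Now I apply the $P$-linear map $f$ (i.e., the \v Stefan permutation) to the leftmost block. Blocks are permuted by $f$, so $f(1)=n+1$ and $f(2)=2n+1$ must lie in a common block. But these two points are $n$ apart in the natural order, so the block containing them is a segment of at least $n+1$ consecutive points, giving $m\ge n+1$. Combining this with $m\le N/3=(2n+1)/3$ yields $3(n+1)\le 2n+1$, i.e.\ $n\le -2$, which is absurd.

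There is essentially no obstacle: the only nontrivial ingredient is the observation that $f([1,2])=[n+1,2n+1]$ spans more than half of $P$, which immediately clashes with the existence of $\ge 2$ blocks of equal size. Everything else is divisor arithmetic for an odd period.
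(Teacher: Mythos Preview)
Your proof is correct and follows essentially the same approach as the paper, which merely states the key observation (that the image of the leftmost basic interval contains more than half of the points of the cycle) and leaves the easy conclusion to the reader. Your write-up fleshes this out with the explicit divisor arithmetic; note that you could have finished slightly faster by using only $k\ge 2$ (so $m\le N/2$), since $n+1>(2n+1)/2$ already gives the contradiction, but your use of $k\ge 3$ via oddness is equally valid.
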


Let us now describe the order among periods of the patterns with no
block structure induced by the forcing relation. Let $N(p)$ be the set
of all integers $s$ with $p\gg s$ and $p$ itself. Given an interval
map $f$, let $NBS(f)$ be the set of periods of all $f$-cycles with no
block structure.

\begin{theorem}[\cite{bm18}]\label{t:mix-ord}
Let $f$ be a continuous interval map. If $m\gg s$ and $f$ has a cycle
with no block structure of period $m$ then $f$ has also a cycle with
no block structure of period $s$. Moreover, $NBS(f)=N(p)$ for some
$p$, and for every $p$ there exists an interval map $f$ such that
$NBS(f)=N(p)$.
\end{theorem}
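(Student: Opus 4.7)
The plan is to split the theorem into the \emph{forcing part} (if $m\gg s$ and $f$ has an NBS cycle of period $m$ then $f$ has one of period $s$) and the \emph{realization part} (every $N(p)$ arises as $NBS(g)$ for some $g$), and to treat each separately.

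For the forcing part, by transitivity of forcing it suffices to verify the implication for consecutive pairs in the $\gg$ order. These fall into three main shapes $4n\gg 4n+2$, $4n+2\gg 2n+1$, and $2n+1\gg 4(n+1)$, together with the tail $m\gg 2\gg 1$ for $m\ge 3$; the tail is immediate from the Sharkovsky Theorem since periods $1$ and $2$ carry no block structure by definition. For the three main cases, the principal tool is Proposition~\ref{exact}: an NBS cycle of period $>2$ makes its $P$-linear map topologically exact, supplying a rich Markov-type structure on the basic intervals. Any case with an odd target period $2n+1$ can then be handled by combining Theorem~\ref{t:stefan} with Lemma~\ref{l:stemix}: as soon as \emph{some} cycle of period $2n+1$ is produced, the \v Stefan pattern of that period is forced, and \v Stefan cycles are NBS. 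In particular, for $4n+2\gg 2n+1$, the two $f^2$-orbits on the given NBS cycle have length $2n+1$ each, and because the forbidden $2$-block structure would consist precisely of these two orbits as intervals, they must fail to occupy two contiguous halves of the cycle; this combinatorial interleaving together with topological exactness should produce a loop of basic intervals of length $2n+1$ and hence, via Lemma~\ref{l:p-in-loops}, a periodic orbit of that period.

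For the even-target cases $4n\gg 4n+2$ and $2n+1\gg 4(n+1)$, the strategy is to locate, in the Markov graph of the $P$-linear map (possibly after passing through a \v Stefan pattern using the previous case), an explicit loop of basic intervals of the desired length whose associated orbit admits no nontrivial partition into permuted segments. The ``grows-by-one-interval-per-step'' layout of the \v Stefan $P$-linear map makes such loops tractable to write down, after which one must verify directly that the resulting period-$(4n+2)$ or period-$(4n+4)$ orbit has no block structure. Iterating the three consecutive implications along the full chain then completes the forcing direction.

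For the realization part, by Theorem~\ref{t:plin} the $P$-linear map of a cycle of pattern $A$ realizes exactly the patterns forced by $A$, so the task is to choose, for each $p$, a cycle $P$ of period $p$ whose pattern is $\gg$-minimal among NBS patterns. For odd $p=2n+1$ the \v Stefan cycle of period $p$ is the natural candidate; for $p=4n$ or $p=4n+2$ one picks an NBS pattern of period $p$ whose forcing closure contains no NBS pattern of period $\gg$-above $p$. The degenerate cases $p=2$ and $p=1$ are realized by $f(x)=1-x$ and by the identity. The principal obstacle throughout consists of two twin combinatorial verifications: showing that a given loop of basic intervals produces a cycle with \emph{no} nontrivial block structure (for the forcing direction), and dually that the chosen realizer does not accidentally create NBS cycles of periods $\gg$-above $p$ (for realization). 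Both amount to a careful simultaneous tracking of how basic intervals are ordered on the line and how they are mapped by $f$; ultimately the argument must show that precisely the ``even, even, odd, even, even, odd, $\dots$'' grouping built into $\gg$ matches the actual NBS forcing behavior, and this exact match is where the depth of the theorem lies.
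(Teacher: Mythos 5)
This statement is not proved in the paper at all: it is imported verbatim from the separate preprint \cite{bm18} (``Forcing among patterns with no block structure''), and the present paper only proves the easy special case recorded in Lemma~\ref{l:case1}. So there is no in-paper proof to compare with, and your task was in effect to reprove the main theorem of another paper. Your text does not do that: it is a strategy outline whose pivotal steps are exactly the hard content and are left as ``should produce'' or ``must verify directly.'' The most serious gap is the step $4n+2\gg 2n+1$. From an NBS cycle of period $4n+2$ one easily gets \emph{some} odd period greater than $1$ (absence of a division plus Lemma~2.1.6 of \cite{alm}), but that odd period could a priori be $2n+3,\dots,4n+1$, which lie $\gg$-\emph{below} $2n+1$; the theorem needs the sharp bound that an odd period at most $2n+1$ is forced, and your interleaving-of-the-two-$f^2$-orbits argument never establishes this. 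Likewise, for $4n\gg 4n+2$ and $2n+1\gg 4n+4$ you propose to exhibit loops of basic intervals of length $4n+2$ or $4n+4$ and then check that the resulting orbit has no block structure; neither the loops nor the verification is given, and checking that an orbit obtained from Lemma~\ref{l:p-in-loops} has the exact period and no block structure is precisely where the difficulty sits (in \cite{bm18} this is handled with additional machinery, over-rotation numbers, not by ad hoc loop hunting).

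The realization half has the same problem in dual form. For odd $p$ you nominate the \v Stefan cycle, but you must then prove that its $P$-linear map has \emph{no} NBS cycles of any period $\gg$-above $p$ (e.g.\ for $p=3$, that every period-$4$ and period-$6$ cycle of the \v Stefan-$3$ map has a block structure); this is a nontrivial claim you do not address. For $p=4n$ or $4n+2$ you simply posit ``an NBS pattern of period $p$ whose forcing closure contains no NBS pattern of period $\gg$-above $p$'' --- but the existence and identification of such forcing-minimal NBS patterns is part of what the theorem asserts, so as written this is circular. The correct pieces of your plan (reduction to consecutive pairs $4n\gg 4n+2\gg 2n+1\gg 4n+4$, the tail $m\gg 2\gg 1$ via the Sharkovsky Theorem, the use of Theorem~\ref{t:stefan} and Lemma~\ref{l:stemix} to upgrade any small odd period to an NBS \v Stefan cycle, and Theorem~\ref{t:plin} for realization) are sensible scaffolding, but the theorem's substance lies entirely in the steps you defer, so the proposal cannot be accepted as a proof.
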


For completeness we prove a simple lemma describing some cases of
Theorem~\ref{t:mix-ord}.

\begin{lemma}\label{l:case1}
The following properties of a continuous interval map $f$ are
equivalent:
\begin{enumerate}
\item $NBS(f)=\{1\}$ or $NBS(f)=\{1,2\};$
\item the map $f$ has no points of odd period greater than $1$.
\end{enumerate}
\end{lemma}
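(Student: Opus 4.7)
The plan is to reduce both directions to Theorem~\ref{t:mix-ord} together with the \v Stefan theorem (Theorem~\ref{t:stefan}) and the observation that \v Stefan cycles have no block structure (Lemma~\ref{l:stemix}). What is left is a short combinatorial inspection of the nbs order.

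For $(1)\Rightarrow(2)$ I would argue contrapositively. Suppose $f$ has a cycle of some odd period $m\ge 3$. Theorem~\ref{t:stefan} then produces a \v Stefan cycle of period $m$, and Lemma~\ref{l:stemix} says this cycle has no block structure. Hence $m\in NBS(f)$ with $m\ge 3$, contradicting $NBS(f)\subseteq\{1,2\}$.

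For $(2)\Rightarrow(1)$, note first that condition (2) forbids $NBS(f)$ from containing any odd integer greater than $1$, since any element of $NBS(f)$ is in particular the period of some cycle of $f$. Theorem~\ref{t:mix-ord} then gives $NBS(f)=N(p)$ for some positive integer $p$. A direct inspection of the nbs order $4\gg 6\gg 3\gg 8\gg 10\gg 5\gg\cdots\gg 2\gg 1$ shows that whenever $p\notin\{1,2\}$ the tail $N(p)$ contains some odd integer $\ge 3$: take $p$ itself if $p$ is odd and $\ge 3$, and take $2k+1\in N(p)$ if $p=4k$ or $p=4k+2$ with $k\ge 1$. Hence $p\in\{1,2\}$, so $NBS(f)=N(1)=\{1\}$ or $NBS(f)=N(2)=\{1,2\}$. (That $1\in NBS(f)$ is automatic, because every continuous self-map of an interval has a fixed point, which vacuously has no block structure.)

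I foresee no serious obstacle: the substantive content is already packaged in Theorem~\ref{t:mix-ord} and the \v Stefan theorem, and the remainder is a case-by-case look at the $\gg$-tails of small starting elements. The only point that needs a moment of care is the convention that a fixed point is counted as a period-$1$ cycle in the definition of $NBS(f)$, which is what makes the formulation ``$NBS(f)=\{1\}$'' meaningful as the minimal case.
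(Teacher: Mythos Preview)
Your argument is correct. The direction $(1)\Rightarrow(2)$ is handled exactly as in the paper, via Theorem~\ref{t:stefan} and Lemma~\ref{l:stemix}.

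For $(2)\Rightarrow(1)$ your route differs from the paper's. The paper invokes Lemma~2.1.6 of~\cite{alm}: if $f$ has no periodic point of odd period larger than $1$, then every cycle of period larger than $1$ has a division (hence a block structure), so $NBS(f)\subseteq\{1,2\}$ directly. You instead appeal to the full strength of Theorem~\ref{t:mix-ord} to write $NBS(f)=N(p)$ and then eliminate $p\notin\{1,2\}$ by inspecting the nbs order. Both arguments are short; the paper's is slightly more self-contained (it does not rely on the classification result from~\cite{bm18}, only on a basic structural lemma), whereas yours stays entirely within the machinery already quoted in this section. Since the lemma is explicitly presented here as ``describing some cases of Theorem~\ref{t:mix-ord},'' there is no circularity in your approach.
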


\begin{proof}
Suppose first that $NBS(f)=\{1\}$ or $NBS(f)=\{1,2\}.$ Then $f$ cannot
have periodic points of odd periods greater than $1$. Indeed,
otherwise by Theorem~\ref{t:stefan} $f$ must have a \v Stefan cycle,
and by Lemma~\ref{l:stemix} every \v Stefan cycle has no block structure, a
contradiction with the assumption. On the other hand, suppose that $f$
has no points of odd period greater than $1$. Then, by Lemma~2.1.6
of~\cite{alm}, all its $f$-cycles of periods greater than $1$ have
division, and, hence, a block structure.
\end{proof}

\section{Basic facts about renormalization towers}\label{s:basic}

As we mentioned in the introduction, we chose towers as the
characteristic of cycles that we want to study, as a compromise
between periods and permutations. Let us discuss shortly the idea of
this choice.

What we want, is a notion that on one hand provides some idea on the
dynamical structure of a cycle, but on the other hand allows us to
study the emerging order without making things too complicated to
handle. Ideally, the order should be linear. In particular, this
notion should be of a ``numerical'' type, rather than
``combinatorial.'' It turns out that the towers satisfy all our
conditions. The order is linear (see Main Theorem). A tower is a
finite string of natural numbers. Some important information about the
dynamical structure of a cycle can be read from its tower (not all the
information, but probably as much as we can count on if we want to
keep the other properties of that notion).

In the rest of this section we prove a few statements concerning
towers, but not dealing with the nbs order. Let $A$ be
a pattern with tower $\dtp(A)=(m_1, m_2, \dots, m_k)$.
Call the blocks from the block structure of period $m_1$ \emph{blocks
  of the first level}. Similarly, the blocks of the next block
structure are said to be \emph{blocks of the second level}, and so on.
It is easy to see that the block structure of the $s$-th level is of
period $m_1\cdot m_2\cdots m_s$; in particular, the period of $A$ is
$m_1\cdots m_k$.

First we relate periods of two block structures of the same cycle.

\begin{lemma}\label{l:blo-div}
Let $P$ be a cycle of period $n$ of an interval map $f$. Suppose that
$P$ has two block structures of periods $p<q$. Then $p$ divides $q$.
\end{lemma}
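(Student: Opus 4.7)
The plan is to assume for contradiction that $p\nmid q$, and to derive a contradiction by combining the interval geometry (blocks are contiguous segments of $P$) with the dynamics of $f^p$. Index the two systems of blocks left-to-right as $B_1,\dots,B_p$ and $C_1,\dots,C_q$, and let $\sigma,\tau$ denote the induced cyclic permutations of periods $p$ and $q$. Basic counting gives $|B_i\cap P|=n/p$ and $|C_j\cap P|=n/q$, and since $P$ is a single $f$-cycle, $f^p$ restricted to each $B_i\cap P$ is a single cycle of length $n/p$.

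The geometric step uses $p<q$: each $C_j$ contains strictly fewer points of $P$ than each $B_i$, so no $C_j$ can contain a whole $B_i$. Since $C_j$ is a contiguous segment of $P$, it follows that $C_j$ can meet at most two consecutive blocks $B_i$, $B_{i+1}$; otherwise it would have to swallow an entire intermediate block.

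For the dynamical step, fix $i$ and set $J_i=\{j:C_j\cap B_i\neq\emptyset\}$. The map $f^p$ preserves $B_i\cap P$ and permutes the nonempty pieces $\{C_j\cap B_i\cap P:j\in J_i\}$ via $\tau^p$. Because $f^p|_{B_i\cap P}$ is a single cycle, it admits no proper invariant subset, so $\tau^p|_{J_i}$ must be a single cycle and all pieces $C_j\cap B_i\cap P$ have the same cardinality. Since $\tau$ has order $q$, $\tau^p$ decomposes into $\gcd(p,q)$ cycles of length $q/\gcd(p,q)$, forcing $|J_i|=q/\gcd(p,q)$; the common fraction of $C_j$ lying in $B_i$ is therefore $\gcd(p,q)/p$.

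Now the two steps collide. If $p\nmid q$ then $\gcd(p,q)<p$, so the fraction above is strictly less than $1$, and every $C_j$ with $j\in J_i$ must protrude from $B_i$. Because $C_j$ is connected, it can straddle only the left or the right boundary of $B_i$, and each boundary admits at most one straddling $C_j$. Hence $|J_i|\leq 2$, i.e., $q/\gcd(p,q)\leq 2$. A short divisibility check (using $\gcd(p,q)\mid p$ and $p<q$) then forces $q=2p$, contradicting $p\nmid q$. The most delicate point is the dynamical step, where one must translate the irreducibility of $f^p$ on $B_i\cap P$ into the statement that $\tau^p|_{J_i}$ is itself a single cycle, which is precisely what pins down both $|J_i|$ and the uniform size of the intersections.
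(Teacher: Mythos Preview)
Your proof is correct, but it takes a substantially longer route than the paper's. The paper's argument is three lines: let $X$ and $Y$ be the leftmost blocks of the period-$p$ and period-$q$ structures, so that $Y\subset X$ (since $Y$ consists of the $n/q<n/p$ leftmost points of $P$); then $f^s(Y)\subset f^s(X)$, and the latter is disjoint from $X$ whenever $p\nmid s$; since $f^q(Y)=Y\subset X$, this forces $p\mid q$.

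By contrast, you set up the full combinatorics of how the two block systems interlace, use the transitivity of $f^p$ on a block $B_i$ to identify $|J_i|$ with the cycle length $q/\gcd(p,q)$ of $\tau^p$, and then bound $|J_i|\le 2$ geometrically. Each step is sound (and the last divisibility check indeed yields $q=2p$, hence $p\mid q$). What you gain is a more detailed picture of the intersection structure---for instance, the uniform size $n\gcd(p,q)/(pq)$ of each piece $C_j\cap B_i$---at the cost of a considerably heavier argument. One incidental simplification you missed: applying your own reasoning to the \emph{leftmost} block $B_1$ (which has only one boundary to straddle) gives $|J_1|\le 1$, so $q/\gcd(p,q)=1$, an immediate contradiction; this is in spirit the same observation that makes the paper's proof so short.
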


\begin{proof}
Denote by $X$ and $Y$ the leftmost blocks of the block structures
of $P$ of periods $p$ and $q$, respectively. Then, clearly, $Y\subset
X$. By definition this implies that $f^s(Y)\cap X=\emptyset$ for every
integer $s$ that is not divisible by $p$. Since $f^q(Y)=Y\subset X$,
it follows that $q$ is divisible by $p$.
\end{proof}

We will also need the next lemma.

\begin{lemma}\label{l:bs-bs}
Let $P$ be a cycle of an interval map $f$. Let $X_1, \dots, X_k$ be
blocks of a block structure of $P$. Moreover, suppose that there
exists $i$ such that $X_i$ is a cycle of $f^k$ with no block
structure. Then $X_1, \dots, X_k$ are blocks of the last non-trivial
block structure of $P$.
\end{lemma}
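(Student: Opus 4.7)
The plan is to prove this by contradiction. Suppose $X_1,\dots,X_k$ are blocks of a block structure of period $k$, but some strictly finer block structure of $P$ exists, say with blocks $Y_1,\dots,Y_q$ of period $q>k$. I want to derive from this a non-trivial block structure of $X_i$ viewed as a cycle of $f^k$, contradicting the hypothesis. The main step is to show that the $Y_j$'s refine the $X_l$'s.

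First, I would apply Lemma~\ref{l:blo-div} to conclude that $k\mid q$, so $q=km$ for some integer $m\ge 2$. Then I would identify the blocks dynamically: since $P$ is a single $f$-cycle of length $N$ and $q\mid N$, the decomposition of $P$ into $f^q$-orbits has exactly $q$ orbits, each of cardinality $N/q$; by comparing cardinalities and the fact that $f^q(Y_j)=Y_j$, each $Y_j$ must coincide with an $f^q$-orbit in $P$. The same argument shows that each $X_l$ is an $f^k$-orbit.

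The nesting property is the crux. Because $k\mid q$, every $f^k$-invariant set is automatically $f^q$-invariant; in particular, each $X_l$ is $f^q$-invariant, hence is a union of $f^q$-orbits, hence is a union of $Y_j$'s. Counting cardinalities, $X_l$ contains exactly $q/k=m$ of the $Y_j$'s. Since each $Y_j$ is a segment of $P$ and $X_l$ is itself a segment of $P$, the $Y_j$'s lying in $X_l$ are consecutive segments of $X_l$. The induced permutation on these $Y_j$'s is given by $f^k$ (which maps $X_l$ to itself and permutes $Y$-blocks among themselves), so they give a block structure of $X_l$ under $f^k$ of period $m=q/k$.

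Specializing to $l=i$, this block structure of $X_i$ under $f^k$ has period $m\ge 2$ and $m=q/k<N/k=|X_i|$ (using $q<N$, which holds since a block structure has at least two points per block by definition), so it is non-trivial. This contradicts the hypothesis that $X_i$ has no block structure as a cycle of $f^k$, and we conclude that no block structure of $P$ of period larger than $k$ exists. I expect the only subtle point to be the refinement step — one must be careful to identify $X_l$ as a union of $Y_j$'s from the dynamical characterization of blocks as $f^q$-orbits, rather than trying to argue combinatorially about segments crossing block boundaries.
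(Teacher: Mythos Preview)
Your argument is correct and follows exactly the same idea as the paper's proof, which is the one-liner ``otherwise $X_i$ would have a block structure for $f^k$, which it does not have by the assumption.'' You have simply supplied the details the paper leaves implicit --- the use of Lemma~\ref{l:blo-div} to get $k\mid q$, the identification of blocks with $f^q$-orbits to obtain the refinement $Y_j\subset X_l$, and the verification that the induced partition of $X_i$ is a genuine (non-trivial) block structure for $f^k$.
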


\begin{proof}
Indeed, otherwise $X_i$ would have a block structure for $f^k$, which
it does not have by the assumption.
\end{proof}

The next lemma immediately follows from definitions and
Lemma~\ref{l:blo-div}.

\begin{lemma}\label{l:bloc-tow}
Suppose that a pattern $A$ has a block structure over a pattern $B$.
Then we have $\dtp(B)=(m_0, m_1, \dots, m_r)$, while
\[
\dtp(A)=(m_0,m_1,\dots,m_r,m_{r+1},\dots,m_k)
\]
for appropriate $r<k$. In particular, all patterns $B$ over which $A$
has a block structure, have towers $(m_0),(m_0, m_1),\dots,(m_0, m_1,
\dots, m_{k-1})$.
\end{lemma}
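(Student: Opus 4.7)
My plan is to establish a period-preserving bijection between the non-trivial block structures of the quotient pattern $B$ and the block structures of $A$ whose period is strictly less than the period of $B$. Once this is set up, the computation of $\dtp(B)$ from $\dtp(A)$ is just bookkeeping.

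First I would strengthen Lemma~\ref{l:blo-div} slightly to the following \emph{nesting} statement: if a cycle $P$ has block structures $\mathcal{S}_p$ and $\mathcal{S}_q$ of periods $p<q$, then every block of $\mathcal{S}_q$ is contained in a unique block of $\mathcal{S}_p$. The argument is a direct elaboration of the proof of Lemma~\ref{l:blo-div}: let $X$ and $Y$ be the leftmost blocks in $\mathcal{S}_p$ and $\mathcal{S}_q$, so $Y\subset X$; since $f^p$ fixes each block of $\mathcal{S}_p$ and $p\mid q$, the blocks $Y,f^p(Y),\dots,f^{q-p}(Y)$ all lie in $X$ and exhaust the $q/p$ blocks of $\mathcal{S}_q$ contained in $X$. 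Applying $f^i$ to both families shifts them compatibly, giving the nesting for every pair of corresponding blocks.

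Next, let $\mathcal{S}$ be the block structure of $A$ whose quotient pattern is $B$, and let $q$ be its period (so $q$ is the period of $B$). The factor map $\varphi\colon P\to B$ sends each block of $\mathcal{S}$ to a single point of (a cycle representing) $B$ and intertwines the dynamics. Given any block structure $\mathcal{T}$ of $A$ of period $r<q$, nesting implies every block of $\mathcal{T}$ is a union of blocks of $\mathcal{S}$, so $\varphi(\mathcal{T})$ is a well-defined partition of $B$ into $r$ segments permuted by the induced map, i.e.\ a block structure of $B$ of period $r$. Conversely, given any non-trivial block structure of $B$ of period $r$, its preimage under $\varphi$ is a block structure of $A$ of period $r$, with $r<q$ because non-trivial block structures of $B$ have period strictly less than $q$. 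These two operations are clearly inverse, yielding the promised period-preserving bijection.

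Finally, write $\dtp(A)=(m_0,m_1,\dots,m_k)$, so that $A$ has block structures precisely of periods $p_j=m_0m_1\cdots m_{j-1}$ for $j=1,\dots,k$, with total period $m_0m_1\cdots m_k$. If $B$ is the quotient by the block structure of period $p_{r+1}$, then $B$ has period $p_{r+1}$, and by the bijection above, the block structures of $B$ have precisely the periods $p_1,\dots,p_r$. Therefore
\[
\dtp(B)=\bigl(p_1/1,\ p_2/p_1,\ \dots,\ p_r/p_{r-1},\ p_{r+1}/p_r\bigr)=(m_0,m_1,\dots,m_r),
\]
with $r<k$ since $r+1\le k$. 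This gives both conclusions of the lemma. The only genuinely nontrivial step is the nesting claim; the rest is a direct translation through the factor map.
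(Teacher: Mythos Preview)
Your proof is correct and is precisely the argument the paper has in mind: the paper's proof consists of the single sentence ``immediately follows from definitions and Lemma~\ref{l:blo-div},'' and your nesting claim together with the quotient/preimage bijection is exactly the unpacking of that sentence. There is no meaningful difference in approach, only in level of detail.
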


The following lemma is actually a simple particular case of Main
Theorem. Given a tower $\mathcal M=(m_0, \dots, m_k)$
of height $k$, we will call any tower $(m_0, \dots,
m_i), i\le k$, a \emph{beginning} of $\mathcal M$.

\begin{lemma}\label{l:triv}
If a tower $\mathcal N$ coincides with a beginning of
a tower $\mathcal M$, then any pattern $A$ with pattern $\M$ forces a
pattern $B$ with tower $\N$, such that $A$ has a block structure over
$B$. In particular, $\mathcal M$ forces $\mathcal N$.
\end{lemma}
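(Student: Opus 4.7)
The approach would be to combine Corollary~\ref{c:gen-by-block} (to produce the cycle of pattern $B$) with Lemma~\ref{l:bloc-tow} (to identify the tower of $B$). Write $\mathcal{M}=(m_0,m_1,\dots,m_k)$ and $\mathcal{N}=(m_0,\dots,m_i)$ for some $i\le k$. The case $i=k$ is trivial with $B=A$ (every pattern forces itself), so assume $i<k$.

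The first step is to extract the relevant block structure of $A$ from the tower data. By Definition~\ref{d:decotow}, the block-structure periods of $A$ are precisely $p_j=m_0 m_1\cdots m_{j-1}$ for $j=1,\dots,k$, while the period of $A$ itself is $m_0\cdots m_k$. In particular, $A$ possesses a block structure of period $p_{i+1}=m_0\cdots m_i$. I would fix a representative cycle $P$ of $A$, the $P$-linear map $f$, and the blocks $X_1,\dots,X_{p_{i+1}}$ of this block structure.

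Next I would apply Corollary~\ref{c:gen-by-block} to $X_1,\dots,X_{p_{i+1}}$. This yields an $f$-cycle $Q$ with exactly one point in the convex hull of each $X_j$; let $B$ denote its pattern (well-defined and forced by $A$ by the corollary). By the convention introduced right after Corollary~\ref{c:gen-by-block}, $A$ has a block structure over $B$. Since $f$ cyclically permutes the $X_j$, it cyclically permutes their convex hulls and hence the points of $Q$, so the period of $B$ equals the number of blocks, namely $p_{i+1}=m_0\cdots m_i$.

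Finally I would invoke Lemma~\ref{l:bloc-tow}, which guarantees that $\dtp(B)$ is a beginning of $\dtp(A)=\mathcal{M}$. Writing $\dtp(B)=(m_0,\dots,m_j)$ and comparing products of entries (the product recovers the period on both sides), I get $m_0\cdots m_j=m_0\cdots m_i$, forcing $j=i$ and $\dtp(B)=\mathcal{N}$. The ``in particular'' conclusion is immediate. I do not anticipate any real obstacle: this lemma is essentially the easy converse direction of Lemma~\ref{l:bloc-tow}, and the main substantive ingredient (Corollary~\ref{c:gen-by-block}) directly supplies the cycle of pattern $B$; what little remains is the bookkeeping above to confirm that the period of $B$ pins down $\dtp(B)$ uniquely among beginnings of $\mathcal{M}$.
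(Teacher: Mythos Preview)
Your proof is correct and follows essentially the same route as the paper: pick the block structure of $A$ at the level corresponding to the height of $\mathcal{N}$, apply Corollary~\ref{c:gen-by-block} to obtain $B$, and conclude that $A$ has block structure over $B$ with $\dtp(B)=\mathcal{N}$. The paper simply asserts the last equality ``by definition,'' whereas you supply the extra bookkeeping via Lemma~\ref{l:bloc-tow} and the period comparison; this is a harmless (and arguably clearer) elaboration, not a different method.
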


\begin{proof}
Let $P$ be a cycle of the $P$-linear map $f$, where $P$ has pattern
$A$. Let $X_1, X_2, \dots$ be the blocks of $P$ at the level equal the
height of $\mathcal N$. By Corollary~\ref{c:gen-by-block} there is a
cycle of $f$ generated by the block structure formed by these blocks.
By definition and the assumptions of the lemma, this cycle has some
pattern $B$ with tower $\mathcal N$, and such that $A$ has a block
structure over $B$. By Theorem~\ref{t:plin} it follows that $\mathcal
M$ forces $\mathcal N$.
\end{proof}

\section{Unimodal cycles and patterns}\label{s:unimodals}

Here we will state several well known facts about unimodal cycles and
patterns. A cycle $P$ of a $P$-linear map $f$ is \emph{unimodal} if
$f$ is unimodal, or the period of $P$ is 2 or 1. In particular, all
cycles of a unimodal map are unimodal. Patterns of unimodal cycles
will be also called unimodal.

We will use the \emph{kneading theory} for unimodal maps. It comes in
several versions; here we will use the version from~\cite{ce}, adapted
to considering cycles. Suppose that $A$ is a unimodal pattern of
period $n>1$. When we consider its permutation $\sigma:\{1,\dots,n\}
\to\{1,\dots,n\}$, we may assume that $\sigma(n)=1$ (that is, the
corresponding unimodal map has a local maximum in the interior of the
interval). Then there is $k\in\{1,\dots,n\}$ such that $\sigma(k)=n$.
In this situation, $\sigma$ is increasing on $\{1,\dots,k\}$ and
decreasing on $\{k,\dots,n\}$. The \emph{kneading sequence} of $A$ is
a sequence $S=(S_1,\dots,S_{n-1})$ of symbols $L,R$ (left, right),
where $S_i=L$ if $\sigma^i(k)<k$ and $S_i=R$ if $\sigma^i(k)>k$. Since
$\sigma(k)=n$, then $S_1=R$; since $\sigma(n)=\sigma^2(k)=1$, then
$S_2=L$. A pattern $A$ is \emph{even} if the number of symbols $R$ in
$S$ is even, and \emph{odd} if this number is odd. It is easy to see
that different patterns have different kneading sequences.

The \emph{star product} $S*T$ of the kneading sequence $S$ of a
pattern $A$ and the kneading sequence $V=(V_1,\dots,V_{m-1})$ of a
pattern $B$ is defined as the concatenation
\begin{align*}
SV_1SV_2\dots,SV_{m-1}S\ \ &\textrm{if $A$ is even, and}\\
S\check{V}_1S\check{V}_2\dots,S\check{V}_{m-1}S\ \ &\textrm{if $A$ is
  odd,}
\end{align*}
where $\check{L}=R$ and $\check{R}=L$. It turns out that $S*T$ is the
kneading sequence of some pattern $C$. This pattern has a block
structure over $A$. If a cycle $P$ has pattern $C$ and $f$ is the
$P$-linear map, then each block is itself a cycle of $f^n$ (where $n$
is the period of $A$) and has pattern $B$. Since the map $f$ is
unimodal, it is monotone on each block except perhaps one. Thus, $C$
is a unimodal extension of $A$ with $B$ (see
Definition~\ref{d:exten}). We will write $C=A*B$.

Since there are unimodal patterns of all periods with no block
structure (for instance, with the kneading sequences $RL^{n-2}$),
applying the star product we can produce unimodal patterns with any
tower. Let us state it as a theorem.

\begin{theorem}\label{kneading1}
There are unimodal patterns with all towers.
\end{theorem}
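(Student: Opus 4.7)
The plan is to construct the desired unimodal pattern as an iterated star product of unimodal patterns with no block structure. The excerpt already notes that for every $n > 1$ there is a unimodal pattern $A_n$ of period $n$ with no block structure—for example, the one with kneading sequence $RL^{n-2}$—so $\dtp(A_n) = (n)$.

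Given an arbitrary tower $\mathcal M = (m_1, m_2, \dots, m_k)$, I would set
\[
C_{\mathcal M} = A_{m_1} * A_{m_2} * \cdots * A_{m_k},
\]
using the associativity of the star product. Each factor is unimodal, and a star product of unimodal patterns is unimodal (the construction recalled in Section~\ref{s:unimodals} puts blocks in place of the points of the outer pattern so that the resulting map is again unimodal), so $C_{\mathcal M}$ is itself unimodal.

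It then suffices to verify that $\dtp(C_{\mathcal M}) = \mathcal M$, which I would do by induction on $k$. The base case $k = 1$ is trivial. For the inductive step, write $C' = A_{m_1} * \cdots * A_{m_{k-1}}$, so that $C_{\mathcal M} = C' * A_{m_k}$ by associativity. By the star product construction, $C_{\mathcal M}$ carries a block structure whose $|C'| = m_1 \cdots m_{k-1}$ blocks, viewed as cycles of $f^{|C'|}$, each have pattern $A_{m_k}$. Since $A_{m_k}$ has no block structure, Lemma~\ref{l:bs-bs} tells me these blocks constitute the last nontrivial block structure of $C_{\mathcal M}$. Combined with Lemma~\ref{l:bloc-tow}, which says that the tower of $C_{\mathcal M}$ extends the tower of any pattern over which it has a block structure, and with the inductive hypothesis $\dtp(C') = (m_1,\dots,m_{k-1})$, I conclude $\dtp(C_{\mathcal M}) = (m_1,\dots,m_{k-1},m_k)$.

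The main technical point to pin down carefully is that $C' * A_{m_k}$ has no block structures whose period falls strictly between $|C'|$ and $|C'|\cdot m_k$: such a structure would, by Lemma~\ref{l:blo-div}, be a multiple of $|C'|$ and would restrict on each period-$|C'|$ block to a nontrivial block structure of $A_{m_k}$, which is impossible. I expect this to be the only nonroutine checkpoint; it is what justifies the clean concatenation of towers under star products and thereby makes the induction close.
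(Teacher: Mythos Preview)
Your proposal is correct and follows essentially the same route as the paper: the paper also takes the unimodal patterns with kneading sequences $RL^{n-2}$ (which have no block structure) and builds the desired pattern via iterated star products. You supply more detail than the paper does in verifying that the resulting tower is exactly $(m_1,\dots,m_k)$, invoking Lemmas~\ref{l:blo-div}, \ref{l:bs-bs}, and~\ref{l:bloc-tow}, whereas the paper leaves this implicit in the sentence preceding the theorem; but the construction and the underlying reasoning are the same.
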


Another approach to the unimodal patterns is to look at the cycles of
the full tent map with given patterns. The \emph{full tent map}
$T:[0,1]\to[0,1]$ is given by the formula $T(x)=2x$ if $x\le 1/2$ and
$T(x)=2-2x$ if $x\ge 1/2$. If $P$ is a cycle of $T$, let us denote by
$\alpha(P)$ the largest (rightmost) point of $P$. For every unimodal
pattern $A$ there is one (if $A$ is a doubling) or two (otherwise)
cycles of $T$ of that pattern (but if the period of $A$ is 1, we do
not treat $\{0\}$ as a cycle). If there is one cycle, say $P$, we will
write $\alpha(A)=\alpha(P)$. If there are two such cycles, say $P_1$
and $P_2$, we will write $\alpha(A)= \min(\alpha(P_1),\alpha(P_2))$.
It turns out that if $A$ and $B$ are unimodal patterns, then $A$
forces $B$ if and only if $\alpha(A)\ge \alpha(B)$. This property is
used, in particular, to provide a simple proof of the second
(``realization'') part of the Sharkovsky Theorem in~\cite{alm}.

There is an important point, $5/6$, which is a border between cycles
of $T$ with and without a division (observe that $T^2(5/6)=2/3$, the
fixed point of $T$). Namely, $\alpha(P)<5/6$ if and only if $P$ has a
division. The following theorem follows immediately from this fact.

\begin{theorem}\label{kneading2}
Any \v Stefan pattern forces all unimodal patterns with towers
starting from $2$ (that is, with a division).
\end{theorem}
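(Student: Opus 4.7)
The plan is to reduce everything to the tent-map / $\alpha$ characterization of forcing among unimodal patterns that was recalled immediately before the theorem, together with the threshold ``$\alpha(P)<5/6$ iff $P$ has a division.'' So the two things I need to check are: (i) the \v Stefan pattern $A$ is itself unimodal, and (ii) $\alpha(A)\ge 5/6$.

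First I would verify that $A$ is unimodal by inspecting the formulas in Definition~\ref{d:stefan}. For the \v Stefan permutation on $\{1,\dots,2n+1\}$ one has $\sigma(1)=n+1$ and $\sigma(2)=2n+1$, and the two cases $\sigma(i)=2n+3-i$ for $2\le i\le n+1$ and $\sigma(i)=2n+2-i$ for $n+2\le i\le 2n+1$ together describe a strictly decreasing sequence on $\{2,3,\dots,2n+1\}$ (the values run $2n+1,2n,\dots,n+2,n,n-1,\dots,1$). Thus the $P$-linear extension increases on the leftmost basic interval and decreases on the rest, so it is unimodal; hence $A$ is a unimodal pattern.

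Next I would invoke Lemma~\ref{l:stemix}, which states that no \v Stefan cycle has a block structure; in particular $A$ has no division. Since $A$ is unimodal, the cycle(s) realizing $A$ in the full tent map $T$ therefore have no division, and by the characterization ``$\alpha(P)<5/6$ iff $P$ has a division'' we obtain $\alpha(A)\ge 5/6$. On the other hand, any unimodal pattern $B$ whose tower starts with $2$ has, by definition, a division, hence $\alpha(B)<5/6$. Therefore $\alpha(A)\ge 5/6>\alpha(B)$, and the unimodal forcing criterion $A$ forces $B$ iff $\alpha(A)\ge\alpha(B)$ immediately yields that $A$ forces $B$.

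There is no serious obstacle: the whole argument is a direct application of the material summarized in this section. The one point that requires a moment of care is checking that the \v Stefan pattern genuinely has only a single turning point (so that the tent-map / $\alpha$ machinery is applicable), but this is transparent once the two-piece formula in Definition~\ref{d:stefan} is written out as a monotone sequence.
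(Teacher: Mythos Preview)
Your argument is correct and is exactly the ``immediate'' argument the paper intends: the paper does not give a separate proof but simply says the theorem follows from the fact that $\alpha(P)<5/6$ iff $P$ has a division, and you have spelled that out. The only extra step you add is the explicit check that the \v Stefan pattern is unimodal (so that the $\alpha$-criterion applies), which the paper takes for granted; your verification via Definition~\ref{d:stefan} is fine.
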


\section{Main Theorem}\label{s:main}

Now we start taking into account the nbs order.

\begin{lemma}\label{l:divis-g}
Suppose that $n=ps$. Then one of the following holds:
\begin{enumerate}
\item[(a)] $p=1$,
\item[(b)] $p=2$,
\item[(c)] $n=4k+2$ and $p=2k+1$,
\item[(d)] $p\gg n$,
\item[(e)] $p=n$.
\end{enumerate}
\end{lemma}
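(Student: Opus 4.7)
The plan is to reduce to a case analysis on the parity of $p$ using the explicit description of the nbs order. First I would dispose of the trivial possibilities: if $s=1$ then $n=p$, which is case (e); if $p\in\{1,2\}$, we land in (a) or (b). So I may assume $p\ge 3$ and $s\ge 2$, and the content is to show that in this regime we are either in case (c) or in case (d).

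Before splitting into cases, I would extract from the listing
\[
4\gg 6\gg 3\gg 8\gg 10\gg 5\gg 12\gg 14\gg 7\gg\dots\gg 2\gg 1
\]
three comparison rules that carry the argument: (i) among even integers $\ge 4$, the numerically smaller one is $\gg$-larger; (ii) among odd integers $\ge 3$, the numerically smaller one is $\gg$-larger; (iii) for an odd $p=2k+1\ge 3$ and an even $m\ge 4$, one has $p\gg m$ if and only if $m\ge 4k+4$, i.e.\ $m>2p$. The boundary in (iii), namely $m=2p=4k+2$, is the unique place where $m\gg p$ rather than $p\gg m$, and it is exactly this obstruction that produces case (c).

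With these rules in hand the analysis is short. If $p\ge 4$ is even, then $n=ps$ is also even and strictly larger than $p$, so rule (i) yields $p\gg n$, i.e.\ case (d). Suppose instead $p=2k+1\ge 3$ is odd. If $s$ is odd (so $s\ge 3$), then $n$ is odd with $n>p$, and rule (ii) gives $p\gg n$. If $s=2t$ is even, then $n=2tp$ is even and rule (iii) applies: $p\gg n$ iff $n>2p$, i.e.\ iff $t\ge 2$. The sole remaining sub-case is $t=1$, where $n=2p=4k+2$, which is case (c). This exhausts every possibility, proving the lemma. The only nontrivial step, and essentially the main obstacle, is to state rule (iii) correctly and notice that the single boundary $n=2p$ with $p$ odd is exactly the case-(c) exception; once that is in place, everything else follows from monotonicity within each parity class.
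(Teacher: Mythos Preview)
Your proof is correct and follows essentially the same approach as the paper: a short parity-based case analysis using the explicit description of the nbs order. The paper splits first on the parity of $n$ (and then on $p$) while you split first on the parity of $p$ (and then on $s$), and you package the odd--even comparison as a single rule (iii), but the underlying observations---monotonicity within each parity class and the boundary $2k{+}1$ versus $4k{+}2$---are identical.
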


\begin{proof}
If (a), (b) and (e) do not hold then $2<p<n$. Under this assumption,
we consider various cases. Observe that in the nbs order we have
\begin{equation}\label{eo}
3\gg 5\gg 7\gg 9\gg\dots\ \ \textrm{and}\ \ 4\gg 6\gg 8\gg 10\gg\dots.
\end{equation}

If $n$ is odd, then $p$ is an odd number smaller than $n$ but greater
than $2$, and by~\eqref{eo}~(d) holds. Similarly, if both $n$ and $p$
are even, then by~\eqref{eo}~(d) holds.

If $n=4k$ and $p$ is odd, we have $p\le 2k$, so $p\le 2k-1$. Now~(d)
follows from the fact that $2k-1\gg 4k$ and from~\eqref{eo}.

The remaining case is $n=4k+2$ and $p$ odd. If $p=2k+1$ then~(c)
holds. If $p\ne 2k+1$ then $p\le 2k-1$, and~(d) follows from the fact
that $2k-1\gg 4k+2$ and from~\eqref{eo}.
\end{proof}

Theorem~\ref{t:mix-ord} deals with patterns with no block structure.
Using Lemma \ref{l:divis-g} one can show that it also has consequences
related to patterns that admit block structure.

\begin{lemma}\label{c:bs-gg}
Suppose that $A$ is a pattern of period $n$ and no division. Then
either $n=4k+2$ and $A$ is a doubling of the \v Stefan pattern of
period $2k+1$, or $A$ forces a pattern of period $n$ with no block
structure. In particular, if $n\gg m$ and $A$ has no division then $A$
forces a pattern of period $m$ and no block structure.
\end{lemma}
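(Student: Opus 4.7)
The plan is to dichotomize based on whether $A$ admits any block structure. If it does not, then $A$ itself is a pattern of period $n$ with no block structure and we are done. So assume $A$ has at least one block structure, and let $p_1$ be the minimal period of such a structure. Since $A$ has no division we have $p_1>2$, and by definition of block structure $p_1<n$. Applying Lemma~\ref{l:divis-g} to the divisor $p_1$ of $n$ leaves exactly two possibilities: (d) $p_1\gg n$, or (c) $n=4k+2$ and $p_1=2k+1$.

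In case (d), Lemma~\ref{l:triv} applied to the length-one beginning $(p_1)$ of $\dtp(A)$ gives a pattern of period $p_1$ with no block structure that is forced by $A$. Combined with $p_1\gg n$ and Theorem~\ref{t:mix-ord}, this yields a cycle of period $n$ with no block structure, so the second alternative of the lemma holds. In case (c), each block contains $n/p_1=2$ points, so $A$ is a doubling of the factor pattern $B$ of period $2k+1$. If $B$ is \v Stefan, the first alternative holds. Otherwise, by Theorem~\ref{t:stefan}, $B$ forces the \v Stefan pattern of period $2k-1$ (this non-\v Stefan subcase is vacuous for $k=1$, since the only pattern of period $3$ is \v Stefan); by Lemma~\ref{l:stemix} this \v Stefan pattern has no block structure, and since $A$ forces $B$ by Corollary~\ref{c:gen-by-block}, $A$ forces it as well. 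A direct check in the nbs order gives $2k-1\gg 4k+2$, so Theorem~\ref{t:mix-ord} once more produces a cycle of period $n$ with no block structure.

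For the ``in particular'' part, assume $n\gg m$. If the second alternative of the lemma holds for $A$, Theorem~\ref{t:mix-ord} immediately yields a cycle of period $m$ with no block structure. If the first alternative holds, then $A$ forces its factor \v Stefan pattern of period $2k+1$, which has no block structure by Lemma~\ref{l:stemix}; since $2k+1$ is the element immediately following $4k+2=n$ in the nbs order, the assumption $n\gg m$ forces either $m=2k+1$ (in which case we are done) or $2k+1\gg m$ (in which case Theorem~\ref{t:mix-ord} again closes the argument).

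The main obstacle I expect is matching the case split from Lemma~\ref{l:divis-g} cleanly to the \emph{minimal} block-structure period and then doing the bookkeeping in the nbs order: verifying $2k-1\gg 4k+2$ (which is what allows the \v Stefan cycle of period $2k-1$ to be pushed down to the required period-$n$ no-block-structure cycle in case (c) with $B$ non-\v Stefan), and the observation that $4k+2$ is immediately followed in the nbs order by $2k+1$ (used for the ``in particular'' conclusion in case (i)).
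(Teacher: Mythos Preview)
Your proof is correct and follows essentially the same approach as the paper's: both identify the minimal block-structure period $p_1=m_1$, invoke Lemma~\ref{l:divis-g} to reduce to cases (c) and (d), and in the exceptional case $n=4k+2$, $p_1=2k+1$ use Theorem~\ref{t:stefan} (non-\v Stefan forces \v Stefan of period $2k-1$) together with $2k-1\gg 4k+2$ and Theorem~\ref{t:mix-ord}. The only cosmetic differences are that the paper phrases the argument as a contradiction (``suppose $A$ does not force a pattern of period $n$ with no block structure'') rather than your direct case split, and that you spell out the ``in particular'' clause in more detail than the paper's one-line ``immediately follows.''
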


\begin{proof}
If $A$ has no block structure, there is nothing to prove. Assume that
it has a block structure. Then the height $r$ of the tower
$\dtp(A)=(m_1, \dots, m_r)$ of $A$ is at least 2. Let $P$ be a cycle
of pattern $A$ of the $P$-linear map $f$. By
Corollary~\ref{c:gen-by-block}, the block structure of the first level
of $\dtp(A)$ generates a certain cycle $Q$ of pattern $B$ with no
block structure. Moreover, $B$ has period $m_1$ and is forced by $A$.

Suppose that $A$ does not force a pattern of period $n$ with no block
structure. If $m_1=2$, then $A$ has division, a contradiction. If
$m_1>2$, then by Lemma~\ref{l:divis-g} and Theorem~\ref{t:mix-ord},
since $B$ does not force a pattern of period $n$ and no block
structure, we have $n=4k+2$ and $m_1=2k+1$. Let us show that $B$ is
the \v Stefan pattern of period $2k+1$. Indeed, suppose that $Q$ is
not a \v Stefan cycle. Then by Theorem~\ref{t:stefan} $B$ forces the
\v Stefan pattern of period $2k-1$. By Theorem~\ref{t:mix-ord}, this
pattern forces a pattern of period $n=4k+2$ with no block structure, a
contradiction with the assumption. The last claim of the lemma
immediately follows.
\end{proof}

The first non-trivial step toward a proof of Main Theorem is to
consider two patterns with towers of equal height that differ at the
last level. While we could state it using the tower notation, it makes
sense to introduce a new notion. Namely, we will say that a pattern
$A$ has a \emph{direct block structure} over a pattern $D$, if $A$ has
a block structure over $D$, but has no block structure over any
pattern $B$ that has block structure over $D$. In other words, to go
from $D$ to $A$, we take into account only one additional level of the
associated tower.

\begin{lemma}\label{l:same-h}
Let a pattern $A$ of period $nm$ have a direct block structure over a
pattern $D$ of period $n$. Let $s>1$ be such that $m\gg s$. Then there
exists a pattern $B$ of period $ns$ with a direct block structure over
$D$, forced by $A$.
\end{lemma}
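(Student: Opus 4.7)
The plan is to construct the desired cycle of pattern $B$ directly: find a suitable small cycle inside one $D$-level block of a cycle of pattern $A$ and spread it around via $f$. Let $P$ be a cycle of pattern $A$ with $P$-linear map $f$, and let $X_1,\dots,X_n$ be the blocks at the $D$-level, ordered left to right, with $f(X_i)=X_{\sigma(i)}$ for the cyclic permutation $\sigma$ defining $D$. Because $A$ has a \emph{direct} block structure over $D$, no block structure of $P$ has period strictly between $n$ and $nm$; translated into the internal structure of a block, this means each $X_i$ is an $f^n$-cycle with no block structure, as otherwise one could partition each $X_i$ into smaller $f^n$-blocks and thereby produce a block structure of $P$ of intermediate period.

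Since the blocks $X_i$ consist of consecutive points of $P$, no point of $P$ outside $X_i$ lies in $\mathrm{conv}(X_i)$, so $f$ maps $\mathrm{conv}(X_i)$ into $\mathrm{conv}(X_{\sigma(i)})$. Consequently $g:=f^n|_{\mathrm{conv}(X_1)}$ is a continuous self-map of the interval $\mathrm{conv}(X_1)$ carrying $X_1$ as a cycle of period $m$ with no block structure. The hypothesis $m\gg s$ and Theorem~\ref{t:mix-ord} applied directly to $g$ then produce a $g$-cycle $Z_1\subset \mathrm{conv}(X_1)$ of period $s$ with no block structure.

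To finish, I would set $Z_{\sigma^j(1)}:=f^j(Z_1)$ for $j=0,\dots,n-1$ and $Q:=Z_1\cup\dots\cup Z_n$. The intervals $\mathrm{conv}(X_i)$ being pairwise disjoint, the $Z_i$ are pairwise disjoint too, so $|Q|=ns$, and $Q$ is an $f$-cycle of period $ns$ whose partition $\{Z_1,\dots,Z_n\}$ is a block structure with factor pattern $D$. Each $Z_i$ is a cycle of $f^n$ with no block structure, so Lemma~\ref{l:bs-bs} makes $\{Z_i\}$ the top non-trivial block structure of $Q$. Hence the pattern $B$ of $Q$ has period $ns$, has block structure over $D$, and admits no block structure intermediate between $D$-level and the top — that is, $B$ has direct block structure over $D$. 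By Theorem~\ref{t:plin} applied to the $P$-linear map $f$, the pattern $A$ forces $B$.

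The heart of the argument is the step producing $Z_1$: the rest is bookkeeping with convex hulls and block structures, but it is here that the nbs-order — specifically Theorem~\ref{t:mix-ord} — does the real work of converting $m\gg s$ into an actual $f^n$-cycle of period $s$ with no block structure on which to build $B$.
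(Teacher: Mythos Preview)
Your argument has a genuine gap at the step where you translate ``$P$ has no block structure of period strictly between $n$ and $nm$'' into ``each $X_i$ is an $f^n$-cycle with no block structure.'' The implication you use runs the wrong way. What follows from directness is only that one cannot \emph{simultaneously} partition every $X_i$ into sub-segments of the \emph{same} number $k$ compatibly with $f$; it does not prevent a single $X_i$ from carrying its own block structure. Here is a concrete counterexample: take $n=2$, $m=4$, $P=\{1,\dots,8\}$, and define $f$ on $P$ by $1\mapsto 5\mapsto 3\mapsto 6\mapsto 2\mapsto 7\mapsto 4\mapsto 8\mapsto 1$. Then $X_1=\{1,2,3,4\}$, $X_2=\{5,6,7,8\}$ are the blocks; one checks that $P$ has no block structure of period $4$, so $A$ has a \emph{direct} block structure over the period-$2$ pattern $D$. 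Nevertheless $f^2|_{X_1}$ is the permutation $1\mapsto 3$, $2\mapsto 4$, $3\mapsto 2$, $4\mapsto 1$, which has the division $\{1,2\}\leftrightarrow\{3,4\}$. So $X_1$ \emph{does} have a block structure for $f^n$, and you cannot invoke Theorem~\ref{t:mix-ord} on it.

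The paper's proof handles exactly this subtlety. It does not claim any $X_i$ lacks a block structure; instead it shows the weaker fact that \emph{some} $X_i$ has no \emph{division}. The argument is: if every $X_i$ had a division, the resulting half-blocks would be precisely the $f^{2n}$-orbits, hence $f$ would permute them, giving $P$ a block structure of period $2n$ and contradicting directness. Once you have an $X_i$ with no division (but possibly still with an odd-period block structure), Theorem~\ref{t:mix-ord} alone is not enough --- you need Lemma~\ref{c:bs-gg}, which upgrades ``period $m$, no division'' to ``forces a pattern of period $s$ with no block structure'' whenever $m\gg s$. After that, your bookkeeping with $Z_i$, convex hulls, and Lemma~\ref{l:bs-bs} goes through unchanged.
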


\begin{proof}
Consider a cycle $Q$ of pattern $A$ and the $Q$-linear map $g$. Let
$X_1, X_2, \dots, X_n$ be the blocks in $Q$ of the block structure
over $D$. These blocks are cyclically permuted by $g$, and each of
them consists of $m$ points. For each $i\in\{1,2,\dots,n\}$, the map
$g^n|_{X_i}$ defines some pattern of period $m$.

Suppose that all cycles of $g^n|_{X_i}$ have a division. Then
$X_i=X_i'\cup X''_i$ so that the convex hulls of $X'_i$ and $X''_i$
are disjoint, $g^n(X_i')=X''_i$ and $g^n(X''_i)=X'_i$. Let us call
sets $X'_i$, $X''_i$ \emph{half-blocks}. Thus, points $x,y\in P$
belong to the same half-block if and only if $y=g^{2kn}(x)$ for some
integer $k\ge 0$. Therefore, if $x,y\in Q$ belong to the same
half-block, then $g(y)=g^{2kn+1}(x)=g^{2kn}(g(x))$, so $g(x)$ and
$g(y)$ belong to the same half-block. This shows that $g$ maps
half-blocks to half-blocks, so the block structure of $A$ over $D$ is
not direct, a contradiction.

Hence, there exists $i, 1\le i\le n,$ such that $g^n|_{X_i}$ has no
division. Then, by Theorem~\ref{t:mix-ord} and by Lemma~\ref{c:bs-gg},
$g^n|_{[\min(X_i), \max(X_i)]}$ must have a cycle $S$ of period $s$
with no block structure. Let $P$ be the $g$-orbit of any point of $S$.
Then the pattern $B$ of $P$ has period $ns$ and is forced by $A$.
Moreover, by Lemma~\ref{l:bs-bs}, $P$ has a direct block structure
over $D$.
\end{proof}

Now suppose that we apply Lemma~\ref{l:same-h} with $s\ge 3$ odd.
There may be many patterns of period $ns$ with a direct block
structure over $D$, forced by $A$. However, at least one of them does
not force any other one. Let us investigate this forcing-minimal
pattern closer. Observe that studying forcing-minimal patterns from a
specific set of patterns gives us an explicit description of
the dynamics guaranteed for a map that has cycles with patterns from
the set of patterns in question.

\begin{lemma}\label{l:stefan-tow}
Assume that a pattern $B$ of period $ns$ has a direct block structure
over a pattern $D$ of period $n$. Moreover, suppose that $s\ge 3$ is
odd and that $B$ does not force any other pattern of the same period
that has a direct block structure over $D$. Let a cycle $P$ of the
$P$-linear map $f$ have pattern $B$. Denote by $X_i$ the blocks of its
block structure over $D$. Then $X_i$ is a \v Stefan cycle of $f^n$ for
every $i$.
\end{lemma}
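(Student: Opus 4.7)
The plan is to argue by contradiction. Suppose some block, say $X_{i_{0}}$, is not a \v Stefan cycle of $f^n$. I would then manufacture a pattern $B'\ne B$ of period $ns$ with direct block structure over $D$ that is forced by $B$, contradicting the forcing-minimality hypothesis on $B$.

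First I would check that $f^n$ restricts to a self-map of $I_{i_{0}}:=[\min X_{i_{0}},\max X_{i_{0}}]$. Indeed, since $f$ is $P$-linear and monotone on each basic interval, it maps the convex hull of any block $X_j$ into the convex hull of $f(X_j)$, and iterating this $n$ times gives $f^n(I_{i_{0}})\subseteq I_{i_{0}}$. On $I_{i_{0}}$, the set $X_{i_{0}}$ is a cycle of $g:=f^n|_{I_{i_{0}}}$ of odd period $s\ge 3$; by assumption it is not \v Stefan, so Theorem~\ref{t:stefan} produces a \v Stefan cycle $S\subseteq I_{i_{0}}$ of $g$ of the same period $s$.

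Now pick $x\in S$ and set $P':=\{f^j(x):j\ge 0\}$. The intervals $I_0,\dots,I_{n-1}$ defined analogously for the other blocks are pairwise disjoint and cyclically permuted by $f$ in lock step with the blocks $X_j$, so the $f$-orbit of $x$ visits a different $I_k$ for each residue of $j$ modulo $n$, while the $f^n$-period of $x$ equals $s$; hence the $f$-period of $x$ is exactly $ns$. Grouping $P'$ by which $I_j$ contains the points yields $n$ subsets $Y_0,\dots,Y_{n-1}$ of $s$ points each, cyclically permuted by $f$ just like the $X_j$, with each $Y_j$ a \v Stefan cycle of $f^n$. Thus $P'$ has a block structure $\{Y_j\}$ whose factor-permutation has pattern $D$; and because \v Stefan cycles have no block structure (Lemma~\ref{l:stemix}), Lemma~\ref{l:bs-bs} guarantees that $\{Y_j\}$ is the last nontrivial block structure of $P'$. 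In other words, this block structure is direct over $D$.

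Let $B'$ be the pattern of $P'$. By Theorem~\ref{t:plin}, $B'$ is forced by $B$, so the minimality hypothesis yields $B'=B$. But then the blocks of $B$ and of $B'$ share the same pattern, which is \v Stefan of period $s$; this forces $X_{i_{0}}$ to be a \v Stefan cycle of $f^n$ after all, contradicting our assumption. The main obstacle I anticipate is confirming that the new block structure on $P'$ is direct over $D$ rather than admitting an intermediate refinement, and that is precisely where Lemma~\ref{l:bs-bs} combined with Lemma~\ref{l:stemix} earns its keep.
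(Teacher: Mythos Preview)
Your approach is essentially the same as the paper's (and considerably more explicit): assume a block $X_{i_0}$ is not \v Stefan, locate a \v Stefan cycle $S$ of $f^n$ inside its convex hull, take the $f$-orbit $P'$, check that $P'$ has direct block structure over $D$, and contradict minimality of $B$.

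One overclaim to flag: you write ``with each $Y_j$ a \v Stefan cycle of $f^n$'', but only $Y_{i_0}=S$ is \v Stefan by construction. The remaining blocks $Y_j$ are images of $S$ under powers of $f$, and since $f$ need not be monotone on $S$, their patterns under $f^n$ can differ from the \v Stefan pattern. This does not damage your appeal to Lemma~\ref{l:bs-bs} (one block without block structure suffices), but it leaves the final step slightly loose: from $B'=B$ you deduce that $X_{i_0}$ is \v Stefan, which tacitly assumes the pattern-isomorphism between $P$ and $P'$ is order-preserving, so that the $i_0$-th block of $P'$ matches the $i_0$-th block of $P$. If the isomorphism were the flip, you would only conclude that $X_{n+1-i_0}$ is \v Stefan. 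The paper's two-line proof is equally terse on this point.
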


\begin{proof}
Observe that the map $f$ cyclically permutes the convex hulls of
blocks $X_i$. Our assumptions imply that $X_i$ is a cycle of odd
period $s$ of $f^n$, whose pattern does not force any other pattern of
period $s$ with no block structure. Thus, $X_i$ is \v Stefan cycle of
$f^n$.
\end{proof}

On the other hand, observe that in our situation $f^n$ restricted to
the convex hull of $X_i$ is a priori not necessarily $X_i$-linear or
even $X_i$-monotone. Thus, we need some facts concerning arbitrary
continuous interval maps that have \v Stefan cycles. It will be
convenient to assume that their periods are strictly larger than 3 as
the case of period 3 has to be considered directly. Recall that if $P$ is
a \v Stefan cycle then by the central $P$-basic interval we mean the
$P$-basic interval such that the collections of $P$-basic intervals to
the left and to the right of it are of the same cardinalities (see
Definition~\ref{d:stefan}).

\begin{lemma}\label{l:center}
Suppose that $f$ is a continuous interval map with a \v Stefan cycle
$Q$ of period $s=2n+1>3$. Suppose that the central $Q$-basic interval
$I$ is such that $f^{s-3}(I)\supset Q$. Then $f$ has a point of an odd
period less than $s$ but larger than $1$ whose orbit is a \v Stefan
cycle. In particular this holds if $f(I)$ contains more than three
points of $Q$.
\end{lemma}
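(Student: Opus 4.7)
The plan is to use the hypothesis $f^{s-3}(I)\supset Q$ to build a loop of intervals of odd length $s-2$ whose penultimate entry is the leftmost $Q$-basic interval $I_1=[q_1,q_2]$, apply Lemma~\ref{l:p-in-loops} to extract a periodic point $x$ with $f^{s-2}(x)=x$, and use $I\cap I_1=\emptyset$ (which holds because $s>3$ gives $n\ge 2$) to rule out period $1$. Since the period of $x$ divides the odd number $s-2$ and exceeds $1$, it is an odd integer in $\{3,5,\dots,s-2\}$, and Theorem~\ref{t:stefan} promotes this cycle to a \v Stefan cycle of the same odd period strictly between $1$ and $s$.

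To build the loop, note that $f^{s-3}$ is continuous on $I$ and $f^{s-3}(I)\supset I_1$. Picking $u\in I$ with $f^{s-3}(u)=q_1$ and $v\in I$ with $f^{s-3}(v)=q_2$ that minimize $|u-v|$ among all such pairs, and checking via the intermediate value theorem that $f^{s-3}$ must stay inside $I_1$ on the subinterval they bound (a closer preimage of $q_1$ or $q_2$ would otherwise contradict minimality), one obtains a closed subinterval $U\subset I$ with $f^{s-3}(U)=I_1$. Set $V_i:=f^i(U)$, so $V_0=U\subset I$, $V_{s-3}=I_1$, and $V_{i+1}=f(V_i)$. Because $Q$ is a \v Stefan cycle we have $f(q_1)=q_{n+1}$ and $f(q_2)=q_{2n+1}$, hence $f(I_1)\supset[q_{n+1},q_{2n+1}]\supset I\supset V_0$, and $V_0\to V_1\to\dots\to V_{s-3}\to V_0$ is a loop of intervals of length $s-2$ in the sense of Definition~\ref{d:loopsi}. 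Lemma~\ref{l:p-in-loops} supplies the desired $x$, with $x\in V_0\subset I$ and $f^{s-3}(x)\in V_{s-3}=I_1$ disjoint from $I$, completing the main assertion.

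For the ``in particular'' reduction, $f(q_{n+1})=q_{n+2}$ and $f(q_{n+2})=q_n$ force $f(I)\supset[q_n,q_{n+2}]$; if $f(I)$ has a fourth point of $Q$, that point is $q_{n-1}$ or $q_{n+3}$, so $f(I)$ contains a basic interval $L$ strictly beyond the image $g(I)=I_n\cup I$ of the $Q$-linear \v Stefan map $g$. Since $f$ and $g$ agree on $Q$ and both are continuous, $f(J)\supset g(J)$ for every basic interval $J$; an easy induction yields $f^j(I)\supset g^j(I)\cup\bigcup_{k=0}^{j-1}g^k(L)$ for $1\le j\le s-3$. A direct calculation using the \v Stefan permutation (the $g$-orbit of $L$ hops alternately left and right of the central range and always lands just outside it) shows that at each step $j$ the union on the right contains at least one basic interval not already in $g^j(I)$. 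As $g^{s-3}(I)$ covers all but one of the $s-1$ basic intervals, $f^{s-3}(I)$ covers them all, so $f^{s-3}(I)\supset Q$ and the previous part applies. The only genuinely analytic step in the whole argument is the subinterval extraction; everything else is combinatorial bookkeeping on the \v Stefan covering graph.
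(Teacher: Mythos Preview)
Your argument is correct and, for the main assertion, matches the paper's proof exactly: both build the loop $I\to\cdots\to I_1=[q_1,q_2]\to I$ of length $s-2$, invoke Lemma~\ref{l:p-in-loops}, use $I\cap I_1=\emptyset$ (from $s>3$) to rule out period $1$, and finish with Theorem~\ref{t:stefan}. Your explicit extraction of the subinterval $U\subset I$ with $f^{s-3}(U)=I_1$ is just an unfolding of what the paper leaves implicit.

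For the ``in particular'' clause the paper takes a slightly shorter route. Rather than tracking the $g$-orbit of an extra basic interval $L$ via your induction $f^j(I)\supset g^j(I)\cup\bigcup_{k<j}g^k(L)$, it observes (using the same comparison $f(J)\supset g(J)$ you use) that $f^{s-3}(I)\supset g^{s-3}(I)=[q_2,q_s]$, so only membership of $q_1$ is in doubt. The extra point $q_r\in f(I)$ with $r\notin\{n,n+1,n+2\}$ reaches $q_1$ along the \v Stefan orbit in at most $s-4$ steps, since $q_n,q_{n+1},q_{n+2}$ are precisely the points needing $s-3$ or more; hence $q_1\in f^{j+1}(I)$ for some $j+1\le s-3$, and $f(I)\supset I$ pushes this to $q_1\in f^{s-3}(I)$. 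Your interval-tracking version reaches the same conclusion with more bookkeeping; the benefit is that it makes the covering at each intermediate step explicit, while the paper's point-tracking is quicker.
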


\begin{proof}
We may assume that $Q=\{q_1, \dots, q_{2n+1}\}$ and that the map
$f|_Q$ induces the permutation described in Definition~\ref{d:stefan}.
By the assumption, $[q_1, q_2]\subset f^{s-3}(I)$. On the other hand,
$I\subset f([q_1, q_2])$. By Lemma~\ref{l:p-in-loops} there exists a
point $z\in I$ such that $f^{s-3}(z)\in [q_1, q_2]$ while
$f^{s-2}(z)=z$. Clearly, $z$ is not $f$-fixed. Hence the period of $z$
is an (odd) divisor of $s-2$ which is greater than $1$. Together with
Theorem \ref{t:stefan} this proves the first claim.

To prove the second claim, notice that $f(I)\supset I$. In addition,
observe that $[q_2, q_s]\subset f^{s-3}(I)$. Thus, it suffices to show
that $q_1\in f^{s-3}(I)$. However, by the assumption $f(I)$ contains a
point $q_r$ with $r\ne n, n+1, n+2$. Now, it follows from the
properties of \v Stefan cycles that $f^{s-1}(q_n)=q_1,$
$f^{s-2}(q_{n+1})=q_1,$ and $f^{s-3}(q_{n+2})=q_1$. Hence for some
$j\le s-4$ we will have that $q_1=f^j(q_r)\in f^j(f(I))$ so that
$q_1\in f^{j+1}(I)\subset f^{s-3}(I)$ as desired.
\end{proof}

Let us turn our attention back to the cycle $P$ from
Lemma~\ref{l:stefan-tow} (we will use the notation from
Lemma~\ref{l:stefan-tow} too) and study it using Lemma~\ref{l:center}.
According to our definition, the pattern of $P$ is a forcing-minimal
pattern among all patterns of period $ns$ that have direct block
structure over a given pattern of period $n$ (here $s\ge 3$ is odd).
Recall that $X_i, 1\le i\le n,$ is a \v Stefan cycle of $f^n$. To
reflect the dynamics of \v Stefan cycles more closely we can
label points in $X_i$ differently than in Lemma~\ref{l:center} (the
notation from Lemma~\ref{l:center} reflects the relative location of
points of a cycle rather than the dynamics). Namely, we can denote
points of $X_i$ as $p_i^j$ in such a way that $f^n(p_i^j) = p_i^{j+1}$
(adding 1 is modulo $s$) and either
\[
p_i^s  < p_i^{s-2} < \ldots < p_i^3 < p_i^1 < p_i^2 < \ldots < p_i^{s-1},
\]
or
\[
p_i^{s-1} < p_i^{s-3} < \ldots < p_i^2 < p_i^1 < p_i^3 < \ldots < p_i^s.
\]

Throughout the following lemmas we shall keep this notation as well as
the assumption that $s$ is odd and $s> 3$. We will use notation
$\langle x,y\rangle$ for the closed interval with endpoints $x,y$
(here we do not assume that $x<y$). We will rely upon the standard
techniques based on Lemmas~\ref{l:p-in-loops} and \ref{l:center} and
follow the ideas from~\cite{alm}.

\begin{lemma}\label{I11.11}
If $1\le t \le n$ then $f^t(\{p_i^j : j = 3,4,\ldots,s-1\}) \cap
\langle f^t(p_i^1),f^t(p_i^2)\rangle = \emptyset$. In particular,
if $f^t(p_i^1)=p_{i+t}^j$, then either there are no elements of $P$ between
$p^j_{i+t}$ and $p^{j+1}_{i+t}$ or there is only one such element,
namely $f^t(p^s_i)$.
\end{lemma}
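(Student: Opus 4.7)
Setup and reduction. Because the bijection $f\colon X_i\to X_{i+1}$ commutes with the Stefan shift $f^n|_{X_i}\colon p_i^j\mapsto p_i^{j+1}$, there must exist integers $\delta_i$ (mod $s$) with $f(p_i^j)=p_{i+1}^{j+\delta_i}$, and hence
\[
f^t(p_i^j)=p_{i+t}^{j+\Delta_i^t},\qquad \Delta_i^t=\delta_i+\delta_{i+1}+\cdots+\delta_{i+t-1},
\]
with $\Delta_i^n\equiv 1\pmod s$ since $f^n$ acts as the Stefan shift on each $X_i$. A direct inspection of the Stefan ordering
\[
p^s<p^{s-2}<\cdots<p^3<p^1<p^2<\cdots<p^{s-1}
\]
(or its reverse) shows that the natural-order interval between $p^a$ and $p^{a+1}$ contains no further point of the block except possibly $p^{a-1}$ precisely when $a\in\{1,2\}\pmod s$. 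So the lemma reduces to the purely arithmetic assertion that $\Delta_i^t\in\{0,1\}\pmod s$ for every $i$ and every $1\le t\le n$; the ``in particular'' clause is then immediate, with the admissible intermediate point $p_{i+t}^{a-1}$ being $f^t(p_i^s)$.

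Main argument. I would prove this reduced statement by contradiction. Assume some $\Delta_i^t\notin\{0,1\}\pmod s$, choosing $t$ minimally. Since $p_i^1$ and $p_i^2$ are consecutive in the Stefan order on $X_i$ and distinct blocks have disjoint convex hulls, $I_i=[p_i^1,p_i^2]$ is a $P$-basic interval, so the $P$-linearity of $f$ gives $f(I_i)=\langle p_{i+1}^{1+\delta_i},p_{i+1}^{2+\delta_i}\rangle$ exactly. In general $f^t(I_i)$ is a connected interval containing $\langle f^t(p_i^1),f^t(p_i^2)\rangle$, and the failure of the claim forces it to properly contain the central $X_{i+t}$-basic interval $[p_{i+t}^1,p_{i+t}^2]$. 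Chaining the resulting loop $I_i\to\cdots\to I_{i+t}$ of $P$-basic intervals with the Stefan loops of $f^n$ available on $\operatorname{conv}(X_{i+t})$ via Proposition~\ref{exact} and Lemma~\ref{l:center}, and applying Lemma~\ref{l:p-in-loops}, I would produce a periodic point of $f$ whose orbit has period $ns$, carries a direct block structure over $D$, and yet realizes a pattern different from $B$. This contradicts the forcing-minimality of $B$ postulated in Lemma~\ref{l:stefan-tow}.

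Main obstacle. The crux is ensuring that the cycle produced by the loop genuinely meets all three requirements: period exactly $ns$ (not a proper divisor), a \emph{direct} block structure over $D$ (no spurious intermediate level), and a pattern actually distinct from $B$. Because the position of $p_{i+t}^{\Delta_i^t}$ within $\langle p_{i+t}^{1+\Delta_i^t},p_{i+t}^{2+\Delta_i^t}\rangle$ depends on the parity of $\Delta_i^t$, and because the violation can occur at the low end ($\Delta_i^t\in\{2,3,\dots\}$) or at the opposite end ($\Delta_i^t$ close to $s-1$), the loop construction likely splits into cases, with Stefan loops of $f^n$ of appropriate length carefully inserted to adjust the period and to prevent the creation of a finer block structure. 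The use of $s>3$ enters here in guaranteeing that a nontrivial Stefan loop inside $\operatorname{conv}(X_{i+t})$ is available for padding.
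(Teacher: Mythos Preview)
Your arithmetic reformulation is correct---the first assertion of the lemma is indeed equivalent to $\Delta_i^t\in\{0,1\}\pmod s$, which is exactly what the paper records separately as Lemma~\ref{I11.12}---but your proof strategy misses a key simplification and, as you yourself concede, leaves a genuine obstacle unresolved.

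The paper's argument is a two-line trick that avoids all of your loop-building. Suppose $f^t(p_i^j)\in\langle f^t(p_i^1),f^t(p_i^2)\rangle$ for some $j\in\{3,\dots,s-1\}$. Apply $f^{n-t}$ to both the point and the interval: since $f^{n-t}(\langle f^t(p_i^1),f^t(p_i^2)\rangle)\subset f^n(\langle p_i^1,p_i^2\rangle)$, we get $p_i^{j+1}=f^n(p_i^j)\in f^n(\langle p_i^1,p_i^2\rangle)$. But $\langle p_i^1,p_i^2\rangle$ is the central interval $I$ of the \v Stefan cycle $X_i$ for $f^n$, so $f^n(I)$ now contains at least the four distinct points $p_i^1,p_i^2,p_i^3,p_i^{j+1}$. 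The ``in particular'' clause of Lemma~\ref{l:center}, applied to the map $f^n$ on the convex hull of $X_i$, then produces an $f^n$-\v Stefan cycle of odd period $<s$ inside that hull; its $f$-orbit is a cycle with direct block structure over $D$ of smaller period, contradicting the forcing-minimality hypothesis from Lemma~\ref{l:stefan-tow}. No loop construction, no case split on the value of $\Delta$, no worry about distinguishing the new pattern from $B$.

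Your approach, by contrast, tries to build a loop directly at the intermediate time $t$, and the obstacle you flag is real. Your claim that ``the failure of the claim forces $f^t(I_i)$ to properly contain $[p_{i+t}^1,p_{i+t}^2]$'' is not justified: for instance when $\Delta_i^t\equiv s-1$, the interval $\langle p_{i+t}^s,p_{i+t}^1\rangle$ lies entirely to one side of $p_{i+t}^1$ in the \v Stefan order and does not contain $p_{i+t}^2$, so the containment you need is not automatic from the endpoint data alone. The paper sidesteps this entirely by pushing forward to time $n$, where the target is the block you started in and Lemma~\ref{l:center} is ready-made.
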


\begin{proof}
Assume the contrary. Then there exists $j \in \{3,4,\ldots,s-1\}$ such
that $f^t(p_i^j) \in \langle f^t(p_i^1),f^t(p_i^2) \rangle$. Hence
$f^n(p_i^j)=f^{n-t}(f^t(p_i^j))\in f^{n-t}(\langle
f^t(p_i^1),f^t(p_i^2) \rangle)\subset f^n(\langle p_i^1, p_i^2
\rangle)$ which shows that $f^n(\langle p_i^1, p_i^2 \rangle)$
contains not only points $p_i^1, p_i^2$ and $p_i^3$ but also the point
$f^n(p_i^j)=p_i^{j+1}$ distinct from any of them. By
Lemma~\ref{l:center} this implies that $f$ has a cycle of period $nk$
and a block structure over $D$ with $k\ge 3$ odd, a contradiction. The
last claim of the lemma follows now from the fact that $i+t$-th block
$X_{i+t}$ of $P$ consists of $f^t$-images of points of $X_i$.
\end{proof}

Lemma~\ref{I11.11} allows one to specify the location of points
$f^t(p_i^1), 1\le t \le n$.

\begin{lemma}\label{I11.12}
If $1\le t \le n$, then either $f^t(p_i^1)=p_{i+t}^1$ or
$f^t(p_i^1)=p_{i+t}^2$.
%$f^t(p_i^1) \in \{p_{i+t}^1,p_{i+t}^2\}$.
\end{lemma}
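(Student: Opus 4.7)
The plan is to translate the hypothesis into a combinatorial restriction on the Štefan ordering of $X_{i+t}$ and then check which indices survive.

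First, I would set $f^t(p_i^1)=p_{i+t}^j$ for some $j\in\{1,\ldots,s\}$ and observe that, since $f^t$ sends $X_i$ bijectively onto $X_{i+t}$ and commutes with $f^n$, the map $f^t$ intertwines the $f^n$-dynamics on the two blocks. Consequently $f^t(p_i^k)=p_{i+t}^{j+k-1}$ for every $k$, with indices read cyclically modulo $s$. In particular, $f^t(p_i^s)=p_{i+t}^{j-1}$, so the ``possibly present'' exceptional point in the last clause of Lemma~\ref{I11.11} is exactly $p_{i+t}^{j-1}$.

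Next I would use the block structure: since the convex hulls of distinct blocks are disjoint, any point of $P$ lying in the closed interval $\langle p_{i+t}^j,p_{i+t}^{j+1}\rangle$ must itself belong to $X_{i+t}$. Combined with Lemma~\ref{I11.11}, this says: among the $s-2$ points $p_{i+t}^{j+2},p_{i+t}^{j+3},\ldots,p_{i+t}^{j-2}$ of $X_{i+t}$ (indices cyclic), \emph{none} lies in $\langle p_{i+t}^j,p_{i+t}^{j+1}\rangle$, and at most one additional point, namely $p_{i+t}^{j-1}$, may lie there.

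Finally I would run through the Štefan ordering on $X_{i+t}$ (say $p_{i+t}^s<p_{i+t}^{s-2}<\cdots<p_{i+t}^3<p_{i+t}^1<p_{i+t}^2<\cdots<p_{i+t}^{s-1}$; the opposite ordering is symmetric) and check for which $j$ the pair $(p_{i+t}^j,p_{i+t}^{j+1})$ encloses at most $p_{i+t}^{j-1}$ of the remaining block. For $j=1$ the points $p_{i+t}^1,p_{i+t}^2$ are already adjacent in the ordering, so nothing lies between; for $j=2$ the only point between $p_{i+t}^2$ and $p_{i+t}^3$ is $p_{i+t}^1=p_{i+t}^{j-1}$, which matches. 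For every $j\ge 3$ (as well as $j=s$, $j=s-1$ via the wrap-around), the Štefan interlacing forces at least two points of $X_{i+t}$ strictly between $p_{i+t}^j$ and $p_{i+t}^{j+1}$, violating the restriction. Hence $j\in\{1,2\}$, which is the desired conclusion.

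The only subtle point, and the place where I would be careful, is the bookkeeping of cyclic indices modulo $s$ when translating ``$f^n$-orbit position'' into ``left-right position'' via the Štefan ordering; everything else is immediate from Lemma~\ref{I11.11} and the commutation $f^t\circ f^n=f^n\circ f^t$.
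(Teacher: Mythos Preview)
Your argument is correct and follows the same route as the paper's proof: set $f^t(p_i^1)=p_{i+t}^j$, deduce $f^t(p_i^2)=p_{i+t}^{j+1}$ from the commutation with $f^n$, invoke Lemma~\ref{I11.11} to see that only $p_{i+t}^{j-1}$ can lie strictly between $p_{i+t}^j$ and $p_{i+t}^{j+1}$, and then use the \v Stefan ordering to conclude $j\in\{1,2\}$. Two small slips worth cleaning up: the forbidden set $\{p_{i+t}^{j+2},\dots,p_{i+t}^{j-2}\}$ has $s-3$ elements, not $s-2$; and for $j=s$ with $s=5$ there is only \emph{one} point (namely $p_{i+t}^3$) between $p_{i+t}^s$ and $p_{i+t}^1$, so the blanket claim ``at least two points'' fails there --- but your actual restriction still excludes this case, since that single point is not $p_{i+t}^{j-1}=p_{i+t}^{s-1}$.
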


\begin{proof}
Let $f^t(p_i^1) = p_{i+t}^j$. Then $f^t(p_i^2) = p_{i+t}^{j+1}$
(recall the dynamical nature of our notation). By Lemma~\ref{I11.11}
either (a) there are no points of $P$ between $p^j_{i+t}$ and
$p^{j+1}_{i+t}$ or (b) there is only one such point, namely
$f^t(p^s_i)$. Let us also emphasize that
$f^n(p^j_{i+t})=p^{j+1}_{i+t}$, i.e. we are talking about a
point of $X_{i+t}$ and its $f^n$-image. Since $X_{i+t}$ is \v Stefan,
the properties of \v Stefan patterns imply that in case (a) we get $j
= 1$ as the only possibility. Similarly, in case (b) we get $j = 2$.
\end{proof}

The location of points $f^t(p_i^1), 1\le t \le n$ is further specified
in the next lemma.

\begin{lemma}\label{I11.13}
There is exactly one $t \in\{1,2,\dots,n\}$ such that $f(p_t^1) \neq
p_{t+1}^1$.
\end{lemma}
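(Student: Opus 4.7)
The plan is to track the $f$-orbit of $p_1^1$ through the blocks $X_1,X_2,\ldots,X_n$ and decode the values of $b_t$ from it, where for each $t\in\{1,\ldots,n\}$ I define $b_t\in\{1,2\}$ by $f(p_t^1)=p_{t+1}^{b_t}$ (block indices taken modulo $n$); the existence of such $b_t$ is guaranteed by Lemma~\ref{I11.12}. The lemma is equivalent to the assertion that exactly one of $b_1,\ldots,b_n$ equals $2$.

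I would introduce the auxiliary sequence $(e_t)_{t=0}^{n}$ with $e_t\in\{1,2\}$ defined by $f^t(p_1^1)=p_{1+t}^{e_t}$, which is again well defined by Lemma~\ref{I11.12}. Clearly $e_0=1$, and $e_n=2$ since $f^n(p_1^1)=p_1^2$. The central step is a transition rule for $(e_t)$. If $e_t=1$, then $f^{t+1}(p_1^1)=f(p_{1+t}^1)=p_{2+t}^{b_{1+t}}$, so $e_{t+1}=b_{1+t}$. If $e_t=2$, then using $p_{1+t}^2=f^n(p_{1+t}^1)$ together with $f\circ f^n=f^n\circ f$ one computes
\[
f^{t+1}(p_1^1)=f^n\bigl(f(p_{1+t}^1)\bigr)=f^n(p_{2+t}^{b_{1+t}})=p_{2+t}^{b_{1+t}+1\;(\mathrm{mod}\,s)}.
\]
Lemma~\ref{I11.12} forces this value to lie in $\{p_{2+t}^1,p_{2+t}^2\}$; combined with $s>3$ and $b_{1+t}\in\{1,2\}$, this pins down $b_{1+t}=1$ and $e_{t+1}=2$.

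The conclusion then essentially writes itself. The sequence $(e_t)$ begins at $1$, ends at $2$, and by the second rule above never decreases, so there is a unique index $t^{*}\in\{1,\ldots,n\}$ at which $(e_t)$ jumps from $1$ to $2$. The first transition rule at $t=t^{*}-1$ gives $b_{t^{*}}=2$. For every other $t$, either both endpoints of the transition are $1$ (first rule, giving $b_t=1$) or the left endpoint is $2$ (second rule, again giving $b_t=1$). Thus exactly one $b_t$ equals $2$.

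I do not anticipate a serious obstacle here: the only subtle point is recognizing that the commutation $f\circ f^n=f^n\circ f$, used together with Lemma~\ref{I11.12}, is exactly what rules out any ``return'' of $e_t$ to the value $1$ once it has reached the value $2$, and hence forces the desired uniqueness.
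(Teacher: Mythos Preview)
Your argument is correct and rests on exactly the same two ingredients as the paper's proof: the commutation $f\circ f^n=f^n\circ f$ (which lets you compute $f(p_i^2)$ from $f(p_i^1)$) together with Lemma~\ref{I11.12}. The only difference is packaging: the paper argues by contradiction, assuming two indices $r<t$ with $b_r=b_t=2$ and $t$ minimal above $r$, then chasing $f^{t-r+1}(p_r^1)$ through the intermediate blocks to obtain $p_{t+1}^3$, which violates Lemma~\ref{I11.12}; you instead track the orbit of $p_1^1$ through the auxiliary sequence $(e_t)$ and read off all the $b_t$'s at once. Both establish existence and uniqueness simultaneously and are equivalent in substance.
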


\begin{proof}
At least one $s$ such that $f(p_s^1) \neq p_{s+1}^1$ must exist
because $f^n(p_i^1) = p_i^2$ for all $i$. Suppose that there are
$t,r\in\{1,2,\dots,n\}$, $t \neq r$, such that $f(p_t^1) \neq
p_{t+1}^1$ and $f(p_r^1) \neq p_{r+1}^1$. We may assume that $r < t$
and that $t$ is the smallest of all the integers $i > r$ such that
$f(p_i^1) \neq p_{i+1}^1$.

By Lemma~\ref{I11.12}, we have $f(p_r^1)=p_{r+1}^2$ and
$f(p_t^1)=p_{t+1}^2$. Since $f^n(p_t^1)=f(p_t^2)$ and
$f^n(p_{t+1}^2)=p_{t+1}^3$, it follows that $f(p_t^2)=p_{t+1}^3$.
Moreover, if $r<i<t$, then $f(p_i^1)=p_{i+1}^1$, and so
$f(p_i^2)=p_{i+1}^2$. Therefore, $f^{t-r+1}(p_r^1) = p_{t+1}^3$. This
contradicts Lemma~\ref{I11.12}.
\end{proof}

Lemma \ref{I11.13} implies the following corollary.

\begin{corollary}\label{I11.14}
In our situation, there is exactly one $t \in\{1,2,\dots,n\}$ such
that $f|_{X_t}$ is not monotone.
\end{corollary}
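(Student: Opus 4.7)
The plan is to pin down $f|_{X_i}$ explicitly on the dynamical labels $p_i^j$ and then read off monotonicity from the two possible \v Stefan orderings on each block. The main tool is the commutation $f\circ f^n=f^n\circ f$: combined with $f^n(p_i^j)=p_i^{j+1}$, it propagates any known value of $f(p_i^1)$ to every $f(p_i^j)$, so the behaviour of $f$ on each block is controlled by the single label $f(p_i^1)$.

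First I would dispose of the indices $i\ne t$. By Lemma~\ref{I11.13} together with Lemma~\ref{I11.12} we have $f(p_i^1)=p_{i+1}^1$, and iterating via the commutation yields $f(p_i^j)=p_{i+1}^j$ for every $j$. Thus $f|_{X_i}$ is the label-preserving bijection between two \v Stefan cycles. Since each of $X_i,X_{i+1}$ carries one of the two \v Stefan orderings listed before Lemma~\ref{I11.11}, this label-preserving bijection either preserves or reverses the order, depending on whether the two orderings agree or not; in either case $f|_{X_i}$ is monotone.

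For the unique $t$ from Lemma~\ref{I11.13} we instead have $f(p_t^1)=p_{t+1}^2$ by Lemma~\ref{I11.12}, and the same commutation produces $f(p_t^j)=p_{t+1}^{j+1}$ for every $j$. So $f|_{X_t}$ realizes the label shift $j\mapsto j+1$ through the Stefan orderings of $X_t$ and $X_{t+1}$. The main (small) obstacle is to verify that this shift is never monotone with respect to a \v Stefan ordering. I would do this by inspecting three consecutive points: for instance, in the first \v Stefan ordering the three smallest points $p_t^s<p_t^{s-2}<p_t^{s-4}$ map to $p_{t+1}^1,p_{t+1}^{s-1},p_{t+1}^{s-3}$, which sit respectively at the middle, the extreme right, and the second-from-right positions of $X_{t+1}$; this triple goes up then down, so is not monotone. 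The other combinations of \v Stefan orderings on $X_t$ and $X_{t+1}$ are handled by the same kind of three-point check. Combined with the previous paragraph, $t$ is the unique index with $f|_{X_t}$ non-monotone, proving the corollary.
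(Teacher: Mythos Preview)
Your proof is correct and follows the line the paper has in mind: the paper states the corollary as an immediate consequence of Lemma~\ref{I11.13} without giving details, and what you wrote is precisely the natural unpacking of that implication via the commutation $f\circ f^n=f^n\circ f$ and the two \v Stefan orderings.

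One small remark: the ``same kind of three-point check'' for the other three combinations of orderings is fine as a sketch (the two \v Stefan orderings are mirror images, so non-monotonicity in the AA case transfers to the others by reflecting source or target), but you can bypass the case analysis entirely. Once you know $f|_{X_i}$ is monotone for every $i\ne t$, observe that the composition of all the $f|_{X_i}$ around the cycle is $f^n$ restricted to a block, which realizes the \v Stefan permutation of period $s\ge 5$ and hence is not monotone; therefore the remaining factor $f|_{X_t}$ cannot be monotone either. This avoids inspecting explicit triples.
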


The following terminology was introduced in \cite{alm}.

\begin{definition}[Extensions]\label{d:exten}
A pattern with a block structure over a pattern $A$, where the map is
monotone on all blocks, except perhaps one, is called an
\emph{extension} of $A$ (in~\cite{mini}, it is called a \emph{simple
  extension}). If the unique non-monotone restriction of the map on
the corresponding block is unimodal, the extension in question is said
to be \emph{unimodal}. If an extension of $A$ is such that the first
return map to a block belongs to a pattern $B$, then we say that this
is an \emph{extension of $A$ by a pattern $B$} (observe that if $A$ is
an extension of a pattern then the first return maps to blocks belong
to the same pattern). If $B$ is a \v Stefan pattern, this defines
\emph{\v Stefan extensions}.
\end{definition}

\begin{figure}
\begin{center}
\includegraphics[width=120truemm]{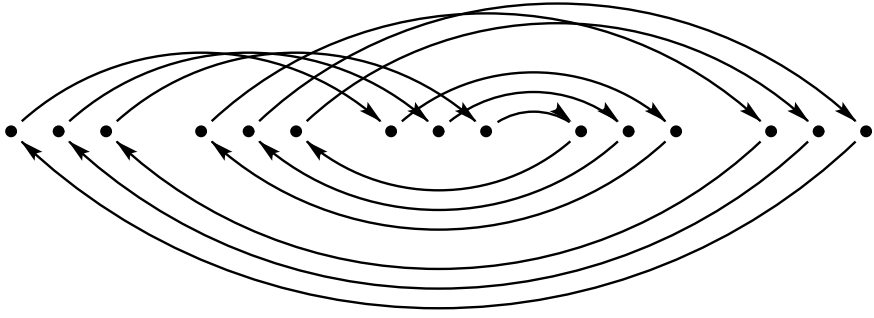}
\caption{A cycle of period 15, which is a unimodal extension of a \v
  Stefan cycle of period 5.}\label{period15}
\end{center}
\end{figure}

Lemma~\ref{l:3ext} is based on Corollary \ref{I11.14} and extra
arguments need in the case when $s=3$.

We say that $f^k$ has a \emph{horseshoe} if there are two closed
intervals $I, J$ with disjoint interiors, which have the property that
$f^k(I)\cap f^k(J)\supset I\cup J$.

\begin{lemma}\label{l:3ext}
A forcing minimal pattern $B$ among all patterns of period $ns$, where
$s\ge 3$ is an odd number, that have a direct block structure over a
given pattern $A$ of period $n$ must be a \v Stefan extension of
$A$.
\end{lemma}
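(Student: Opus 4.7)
The plan is to split the argument into two cases based on the value of $s$. When $s>3$, the main work has already been done by the preceding sequence of lemmas. Lemma~\ref{l:stefan-tow} asserts that each block $X_i$ of $B$ is a \v Stefan cycle of $f^n$, while Corollary~\ref{I11.14} states that there is exactly one index $t$ with $f|_{X_t}$ non-monotone and $f$ monotone on all other blocks. Combining these observations, $B$ is an extension of $A$ (in the sense of Definition~\ref{d:exten}) whose first return map to a block has \v Stefan pattern, i.e., a \v Stefan extension of $A$.

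When $s=3$, Lemma~\ref{l:center} is unavailable (it requires $s>3$), so Corollary~\ref{I11.14} cannot be invoked. On the other hand, every $3$-cycle is automatically a \v Stefan cycle, so the first-return condition holds for free. What still needs to be shown is that $f$ is monotone on all blocks $X_i$ except at most one. Since each $f^n|_{X_i}$ is a nontrivial $3$-cycle, and hence a non-monotone permutation of $3$ points, at least one restriction $f|_{X_j}$ must be non-monotone. The entire content of the $s=3$ case is therefore to rule out the possibility that two or more of the restrictions are non-monotone.

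I would argue this by contradiction via loops of $P$-basic intervals. Suppose $f|_{X_r}$ and $f|_{X_t}$ are both non-monotone for some $r\neq t$. Then, tracking how these two ``folds'' propagate through the cyclic permutation of blocks, the Markov graph of $f^n$ on the $P$-basic intervals contained in the convex hull of $X_1$ acquires an extra period-$3$ loop beyond the one that realizes $X_1$ itself. By Lemma~\ref{l:p-in-loops} this produces a period-$3$ cycle $Y$ of $f^n$ lying in the interior of the convex hull of $X_1$ and disjoint from $X_1$. Its $f$-orbit $Y\cup f(Y)\cup\dots\cup f^{n-1}(Y)$ is then a cycle of $f$ of period $3n$ whose blocks lie in the convex hulls of $X_1,\dots,X_n$, so it has a block structure over $A$; by Lemma~\ref{l:bs-bs}, the block structure is direct, since each block is a $3$-cycle of $f^n$ which has no further block structure. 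The resulting new pattern has period $3n$, has direct block structure over $A$, is forced by $B$, and is distinct from $B$, contradicting the minimality of $B$.

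The main obstacle will be making precise this extra-loop analysis: one needs to verify that the combination of two non-monotone block-restrictions really does introduce a period-$3$ loop in the Markov graph of $f^n$ on the convex hull of $X_1$ that is distinct from the one traced by $X_1$, and that the period-$3$ orbit produced is genuinely new (disjoint from the original $X_1$ rather than coinciding with it through some boundary coincidence). This amounts to a careful bookkeeping with loops of $P$-basic intervals in the spirit of Lemmas~\ref{I11.11}--\ref{I11.13}, but specialized to the low-period situation $s=3$ where the \v Stefan structure leaves only two $P$-basic intervals inside each convex hull of a block to work with.
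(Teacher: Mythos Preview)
Your $s>3$ case is correct and matches the paper exactly: Lemma~\ref{l:stefan-tow} plus Corollary~\ref{I11.14} together say $B$ is a \v Stefan extension of $A$.

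For $s=3$ your overall strategy is right---argue by contradiction from two non-monotone block restrictions and produce a second period-$3n$ cycle with direct block structure over $A$---but the paper's implementation is more concrete and sidesteps exactly the obstacle you flag. Instead of hunting for an ``extra period-$3$ loop'' in the Markov graph, the paper shows that two non-monotone restrictions force a \emph{horseshoe} for $f^n$. Concretely: if $X$ and $f^r(X)$ are the two non-monotone blocks, with convex hulls $Z=I\cup J$ and $T=K\cup L$ (each a union of two $P$-basic intervals), then non-monotonicity of $f^r|_Z$ gives (say) $f^r(I)=T$, and non-monotonicity of $f^{n-r}|_T$ gives (say) $f^{n-r}(K)=Z$, so $f^n(K)=T$. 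A short case split on whether $K\subset f^r(J)$ yields either $f^n(I)\cap f^n(J)\supset I\cup J$ or $f^n(K)\cap f^n(L)\supset K\cup L$. Either way $f^n$ has a horseshoe inside the convex hull of a block.

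The advantage of the horseshoe over your loop-tracking is twofold. First, it is a direct finite computation with at most four intervals, no Markov-graph bookkeeping required. Second, a horseshoe produces \emph{infinitely many} period-$3$ cycles of $f^n$, so the existence of one distinct from the original $X_i$ is automatic; your approach has to argue separately that the period-$3$ loop you build is not the same one that traces out $X_1$ (and loops of basic intervals can perfectly well return a point of $P$, so ``disjoint from $X_1$'' is not free). The horseshoe neatly absorbs both the construction and the distinctness check in one stroke.
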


\begin{proof}
By Corollary \ref{I11.14} it suffices to consider the case of $s=3$. Let $P$ be a
cycle of pattern $B$, and let $f$ be a $P$-linear map. Suppose that $B$ is
not an extension of $A$ and show that $f^n$ has a horseshoe on a block
of $P$ associated with the block structure of $B$ over $A$. Let $X$ be
a block of $P$ on which $f$ is not monotone and $r$ be such that
$f|_{f^r(X)}$ is not monotone. Let the convex hull of $X$ be $Z$, and
the convex hull of $f^r(X)$ be $T$. Then $Z=I\cup J$ is the union of
two $P$-basic intervals $I$ and $J$. Similarly, $T=K\cup L$ is the union of two
$P$-basic intervals $K$ and $L$. Since $f^r|_Z$ is not monotone, we may assume that
$f^r(I)=T$. Since $f^{n-r}|_T$ is not monotone, we may assume that
$f^{n-r}(K)=Z$, and so $f^n(K)=T$. If $K\subset f^r(J)$, then $I$ and
$J$ form a horseshoe of $f^n$. Otherwise $f^r(J)=L$. Since
$f^n(J)\supset I$, then $f^{n-r}(L)\supset I$ and $f^n(L)=T$. Hence
$K$ and $L$ form a horseshoe for $f^n$.
It is easy to see that the existence of a horseshoe for $f^n$
on the convex hull of a block of $P$ implies that $f$ has a cycle
$Q\ne P$ of period $3n$ whose pattern $D$ has a block structure
over $A$. Thus, $B$ forces $D$ and, therefore, is not forcing minimal,
a contradiction.
\end{proof}

The following result easily follows from Lemmas~\ref{l:same-h},
\ref{l:stefan-tow}, and~\ref{l:3ext}.

\begin{proposition}\label{ext}
Let a pattern $A$ of period $nm$ have a direct block structure over a
pattern $D$ of period $n$. Let $s>1$ be an odd integer such that $m\gg
s$. Then $A$ forces a pattern of period $ns$ which is an extension of
$D$ by a \v Stefan pattern.
\end{proposition}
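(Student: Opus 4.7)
The plan is to obtain Proposition~\ref{ext} by assembling the three preceding lemmas, with the small additional observation that a forcing-minimal element of the set of patterns forced by $A$ with the prescribed block structure is also forcing-minimal among \emph{all} patterns with that block structure.

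First I would invoke Lemma~\ref{l:same-h} with the given data: $A$ has period $nm$ and direct block structure over $D$, $s>1$, and $m\gg s$. The lemma guarantees at least one pattern of period $ns$ forced by $A$ and having direct block structure over $D$. Since there are only finitely many patterns of any prescribed period, the collection
\[
\mathcal S=\{C : \dtp\text{-period}(C)=ns,\ A\text{ forces }C,\ C\text{ has a direct block structure over }D\}
\]
is a nonempty finite set. Hence $\mathcal S$ contains a forcing-minimal element $B$, i.e., one that does not force any other member of $\mathcal S$.

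Next I would check that this $B$ is actually forcing-minimal among \emph{all} patterns of period $ns$ with a direct block structure over $D$, which is the hypothesis of Lemma~\ref{l:3ext}. Suppose toward a contradiction that $B$ forces some pattern $B'\neq B$ of period $ns$ having direct block structure over $D$. Then, by transitivity of forcing, $A$ forces $B'$, so $B'\in\mathcal S$, contradicting minimality of $B$ in $\mathcal S$. Thus $B$ is indeed forcing-minimal in the sense required.

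Finally, since $s\ge 3$ is odd, Lemma~\ref{l:3ext} applies and forces $B$ to be a \v{S}tefan extension of $D$; on the other hand, by construction $A$ forces $B$, so $B$ is the desired pattern. There is no serious obstacle here beyond the bookkeeping: all the dynamical content is already carried by Lemmas~\ref{l:same-h}, \ref{l:stefan-tow}, and~\ref{l:3ext}, and the mild point that needs a line of care is precisely the equivalence of the two flavors of ``forcing-minimal,'' which is immediate from the transitivity of the forcing relation.
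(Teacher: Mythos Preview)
Your proposal is correct and follows exactly the route the paper intends: the paper's own proof is the single sentence ``easily follows from Lemmas~\ref{l:same-h}, \ref{l:stefan-tow}, and~\ref{l:3ext},'' and you have simply spelled out the natural argument behind that sentence. The one point you take care to justify---that a forcing-minimal element of $\mathcal S$ is automatically forcing-minimal among \emph{all} patterns of period $ns$ with direct block structure over $D$, by transitivity of forcing---is indeed the only nontrivial bookkeeping step, and your handling of it is correct.
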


In order to prove Main Theorem we also need two known results. We will
state them in a form consistent with out notation.

\begin{theorem}[Theorem~2.10.8 of~\cite{alm}]\label{2.10.8}
If a pattern $C$ is an extension of $B$ by $A$ and $A$ forces
$\widetilde{A}$ then $C$ forces an extension of $B$ by
$\widetilde{A}$.
\end{theorem}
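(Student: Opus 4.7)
The plan is to realize $C$ by a $P$-linear map $f$, invoke the hypothesis that $A$ forces $\widetilde{A}$ inside a single block of the block structure of $P$ over $B$ to produce a cycle of $f^n$ of pattern $\widetilde{A}$, and then spread it by the $f$-action to obtain a cycle whose pattern is the desired extension of $B$ by $\widetilde{A}$.

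In detail, let $P$ be a cycle of pattern $C$ and $f$ the $P$-linear map. Let $X_1,\dots,X_n$ be the blocks of $P$ over $B$, so that $n$ is the period of $B$, and let $Y_i$ denote the convex hull of $X_i$. By the assumption on $C$, the map $f$ cyclically permutes the $Y_i$, each $X_i$ is a cycle of $f^n$ of pattern $A$, and there is at most one index $i_0$ such that $f$ is non-monotone on $Y_{i_0}$.

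The first main step is to produce a cycle of $f^n$ of pattern $\widetilde{A}$ inside $Y_1$. I would apply the hypothesis that $A$ forces $\widetilde{A}$ to the $X_1$-linear extension of $f^n|_{X_1}$, which is a continuous interval map on $Y_1$ carrying $X_1$ as a cycle of pattern $A$, to obtain a cycle of pattern $\widetilde{A}$ of that linear map. Such a cycle corresponds to a loop of $X_1$-basic intervals; since $f^n$ agrees with the $X_1$-linear map on $X_1$, the covering relation among these basic intervals is the same for $f^n$, and so the loop is also valid for $f^n$. Applying Lemma~\ref{l:p-in-loops} then yields a cycle $\widetilde{X}_1\subset Y_1$ of $f^n$ of pattern $\widetilde{A}$.

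Setting $\widetilde{X}_i = f^{i-1}(\widetilde{X}_1)\subset Y_i$ for $i=1,\dots,n$, the disjointness of the $Y_i$ together with the cyclic $f$-permutation among them shows that $Q=\widetilde{X}_1\cup\dots\cup\widetilde{X}_n$ is a single $f$-cycle carrying the block structure $\{\widetilde{X}_i\}$ over $B$. Monotonicity of $f$ on $Y_i$ for $i\ne i_0$ descends to monotonicity on the sub-interval spanned by $\widetilde{X}_i$, so the pattern of $Q$ is an extension of $B$. Since $f^n|_{\widetilde{X}_1}$ realizes $\widetilde{A}$, the observation in Definition~\ref{d:exten} forces all the first return maps on the blocks of $Q$ to share pattern $\widetilde{A}$, and $Q$ realizes an extension of $B$ by $\widetilde{A}$ that is forced by $C$. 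The main obstacle I expect is the lifting step, where the $\widetilde{A}$-cycle of the $X_1$-linear map delivered by the forcing hypothesis must be converted into a cycle of the original $f^n$ inside $Y_1$; once this loop-of-basic-intervals argument is in place, the monotonicity check and the identification of the common return-map pattern follow immediately from the definition of extension.
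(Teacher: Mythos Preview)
The paper does not give its own proof of this statement: Theorem~\ref{2.10.8} is simply quoted from \cite{alm} and used as a black box. So there is no ``paper's proof'' to compare against; I can only assess your argument on its own merits.

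Your outline is correct and is essentially the standard proof. One simplification removes what you flag as the main obstacle. Because $f$ is $P$-linear and $P$ has the block structure $\{X_i\}$, the map $f$ restricted to each $Y_i$ is piecewise linear with breakpoints in $X_i$ and values at those breakpoints in $X_{i+1}$; hence $f(Y_i)\subset Y_{i+1}$. Consequently $g:=f^n|_{Y_1}$ is already a continuous self-map of $Y_1$ carrying $X_1$ as a cycle of pattern $A$. The \emph{definition} of forcing then gives a $g$-cycle $\widetilde{X}_1\subset Y_1$ of pattern $\widetilde{A}$ directly, with no need to pass through the $X_1$-linear map, loops of basic intervals, or Lemma~\ref{l:p-in-loops}. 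This also sidesteps the (otherwise genuine) issue of checking that the periodic point produced by the loop lemma has the full period and the correct pattern.

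The remaining steps---that $Q=\bigcup_i f^{i-1}(\widetilde{X}_1)$ is a single $f$-cycle of period $n\cdot|\widetilde{X}_1|$ because the $Y_i$ are disjoint and cyclically permuted, that the block structure of $Q$ is over $B$ because the $\widetilde{X}_i$ sit in the same convex hulls $Y_i$ permuted the same way as the $X_i$, and that monotonicity of $f$ on $Y_i$ for $i\ne i_0$ descends to the convex hull of $\widetilde{X}_i$---are exactly as you describe and complete the proof.
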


The second one is a part of Theorem 9.12 from \cite{mini}.

\begin{theorem}\label{9.12}
Suppose that a pattern $A$ forces a pattern $B$ but does not have a
block structure over $B$, and $B$ is not a doubling. Then $A$ forces
extensions of $B$ by all unimodal patterns.
\end{theorem}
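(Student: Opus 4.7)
The plan is to construct, for each unimodal pattern $V$, a cycle of $f$ whose pattern is an extension of $B$ by $V$, where $P$ is a cycle of pattern $A$ and $f$ is the $P$-linear map; by Theorem~\ref{t:plin} this will show that $A$ forces every such extension. An alternative strategy, which I would keep in reserve, is to first produce a single extension of $B$ by some ``rich'' pattern $A_0$ forcing large classes of unimodal patterns, and then push this forward via Theorem~\ref{2.10.8}.

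By Theorem~\ref{t:plin} applied to the hypothesis that $A$ forces $B$, the map $f$ has a cycle $Q=\{q_1<\dots<q_n\}$ of pattern $B$, cyclically permuted by some $\sigma$. I would focus on the first return map $g=f^n$ to the convex hulls of the natural ``block candidates'' $J_i$ around each $q_i$ (for instance, short intervals around $q_i$ meeting no other point of $Q$, or pairs of $Q$-basic intervals adjacent to $q_i$). The core observation is that if $f$ cleanly sent each $J_i$ into $J_{\sigma(i)}$ and the points of $P$ were distributed one block per $J_i$, then $P$ would have a block structure over $B$. The failure of this must produce an extra covering relation in the Markov graph of $f$ on the intervals $J_i$, and with care one extracts from it a horseshoe for $g$ on at least one $J_{i_0}$. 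Here the hypothesis that $B$ is not a doubling enters to rule out the degenerate possibility that the only extra dynamics is a swap of two halves of a block, which would merely realize a doubling of $B$ and give no genuine horseshoe.

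Once the horseshoe for $g$ on $J_{i_0}$ is in hand, I would realize any prescribed unimodal pattern $V$ as a periodic orbit of $g|_{J_{i_0}}$, using the standard kneading-theoretic fact that a unimodal map with a horseshoe admits cycles of every unimodal pattern (concretely, by applying Lemma~\ref{l:p-in-loops} to a suitable loop of basic intervals inside $J_{i_0}$). The $f$-orbit of this cycle then has one piece in each $J_i$, cyclically permuted by $\sigma$, and its first return map to $J_{i_0}$ has pattern $V$; hence the orbit realizes an extension of $B$ by $V$, as required.

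The main obstacle is the central step: extracting the horseshoe for $g$ on some $J_{i_0}$ from the abstract hypothesis that $A$ does not have a block structure over $B$. This calls for a careful case analysis of the possible sources of extra covering relations in the Markov graph of $f$ — whether they arise from an iterate of $f$ that carries a block candidate strictly over its target, or from points of $P$ straddling several candidates — and it is precisely at this point that the non-doubling assumption on $B$ is used, to eliminate the one configuration that would otherwise produce only a doubling rather than a genuine horseshoe.
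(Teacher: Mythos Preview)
The paper does not prove Theorem~\ref{9.12}; it is quoted as ``a part of Theorem~9.12 from~\cite{mini}'' and used as a black box. So there is no proof in the paper to compare your proposal against.

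As for the proposal itself, it is an outline with a real gap, not a proof. You yourself identify the crux --- extracting a horseshoe for $g=f^n$ on some $J_{i_0}$ from the hypothesis that $A$ has no block structure over $B$ --- and then do not carry it out. The difficulty is genuine: the failure of block structure tells you something about how $P$ sits relative to $Q$, but $Q$ is an auxiliary cycle of the $P$-linear map, and translating ``$P$ has no block structure over $B$'' into a concrete covering relation on intervals around the points of $Q$ requires substantial work (this is essentially the content of the relevant chapter of~\cite{mini}). The non-doubling hypothesis on $B$ is used there in a more delicate way than merely ruling out one degenerate configuration.

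There is a second gap you do not flag. Even granting a horseshoe for $g$ on $J_{i_0}$ and a $g$-cycle $R\subset J_{i_0}$ of pattern $V$, the $f$-orbit of $R$ has a block structure over $B$, but it need not be an \emph{extension} of $B$ in the sense of Definition~\ref{d:exten}: that definition requires the map to be monotone on every block except possibly one. Your construction gives no control over how $f$ acts on the blocks sitting in $J_i$ for $i\ne i_0$; folds of $f$ on those intervals could make the induced permutation non-monotone on several blocks. Producing a cycle whose first return to one block has pattern $V$ is weaker than producing an extension of $B$ by $V$, and it is the latter that the theorem asserts. Fixing this requires either choosing the horseshoe so that all intermediate images are traversed monotonically, or a different argument altogether; in~\cite{mini} this is handled through their machinery of reductions and fold-counting rather than by a direct horseshoe construction.
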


Now we are at last ready to prove the first claim of Main Theorem. We
state it as a separate theorem below. Recall that by
Lemma~\ref{towertails}, $\gg$-tails of towers coincide with the sets
$\Tow(\cdot)$ for infinite towers.

\begin{theorem}[Strong version of the first claim of Main
    Theorem]\label{t:1st}
Let $\mathcal N'$ and $\mathcal M'$ be two towers such that
\[
\mathcal N'=(m_1,\dots,m_t,m_{t+1},\dots,m_r),\ \ \ \mathcal
M'=(m_1,\dots,m_t, s),
\]
with $m_{t+1}\gg s$. Then any pattern $A$ of tower $\mathcal N'$
forces a pattern $C$ of tower $\mathcal M'$, and all unimodal
extensions of $C$. Thus, if $\mathcal N\gg \mathcal M$ and if a
continuous interval map $f$ has a cycle with tower $\mathcal N$ then
it has a cycle with tower $\mathcal M$. Hence, there exists an
infinite tower $\mathcal K$ such that the set of all finite towers of
cycles of $f$ is $\Tow(\mathcal K)$.
\end{theorem}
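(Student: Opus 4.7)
The plan is to prove the strong form first, then deduce the two general assertions.

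For the strong form, I would apply Corollary~\ref{c:gen-by-block} twice to obtain from $A$ the forced factors $D$ of tower $(m_1,\dots,m_t)$ and $A''$ of tower $(m_1,\dots,m_{t+1})$, where $A''$ has direct block structure over $D$ of period $n=m_1\cdots m_t$ with blocks of size $m_{t+1}$. Since $m_{t+1}\gg s$, Lemma~\ref{l:same-h} (for $s$ even) or Proposition~\ref{ext} (for $s$ odd with $s\ge 3$) yields a pattern $C$ forced by $A''$ of tower $(m_1,\dots,m_t,s)$; in the odd case $C$ is in fact a \v Stefan extension of $D$ by Lemma~\ref{l:stefan-tow} and Lemma~\ref{l:3ext}. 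In that case $C$ is not a doubling (since $s\ne 2$) and $A$ has no block structure over $C$ (since $C$'s tower is not a beginning of $\mathcal N'$, as $s\ne m_{t+1}$), so Theorem~\ref{9.12} delivers that $A$ forces every unimodal extension of $C$.

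To reduce the two general assertions to the strong form: given $\mathcal N\gg\mathcal M$, locate the first position $t+1$ at which their canonical extensions disagree; at that position the entry of $\mathcal N$ is $\gg$ the entry of $\mathcal M$ (the case when $\mathcal M$ is a prefix of $\mathcal N$ is immediate from Lemma~\ref{l:triv}). Apply the strong form with $\mathcal M'$ equal to the initial segment of $\mathcal M$ of length $t+1$. If $\mathcal M=\mathcal M'$ we are done; otherwise Theorem~\ref{kneading1} produces a unimodal pattern $U$ whose tower is the remaining tail of $\mathcal M$, and the extension of $C$ by $U$ is then a unimodal extension of $C$ with tower $\mathcal M$, hence forced by $A$. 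The existence of the infinite tower $\mathcal K$ such that the finite towers of cycles of $f$ coincide with $\Tow(\mathcal K)$ is then immediate from the characterization of $\gg$-tails in Lemma~\ref{towertails}.

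The main obstacle is the remaining case $s=2$: here $C$ is forced to be a doubling of $D$, so Theorem~\ref{9.12} cannot be applied with $B=C$. To handle this, I would exploit that $m_{t+1}\gg 2$ forces $m_{t+1}\ge 3$, so one can pick an odd integer $s'\ge 3$ with $m_{t+1}\gg s'$ (such $s'$ exists because the nbs order places infinitely many odd numbers past $m_{t+1}$) and apply Proposition~\ref{ext} with that $s'$ to produce a \v Stefan extension $F$ of $D$ of tower $(m_1,\dots,m_t,s')$ forced by $A$. By Theorem~\ref{kneading2} the \v Stefan pattern of period $s'$ forces every unimodal pattern with a division, and Theorem~\ref{2.10.8} then gives that $F$ forces an extension of $D$ by every such unimodal pattern. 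The delicate step is to identify a single doubling factor $C$ of $D$ that is common to all these extensions, so that the forced patterns realize precisely the unimodal extensions of that specific $C$; this requires showing that the \v Stefan structure inside $F$ picks out a canonical way to split each $D$-block into halves, and that the extensions of $D$ coming from Theorem~\ref{2.10.8} can be chosen consistently with this splitting.
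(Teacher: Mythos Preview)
Your plan is essentially the paper's: pass from $A$ to the pattern $B$ of tower $(m_1,\dots,m_{t+1})$ via Lemma~\ref{l:triv}, then to $C$ of tower $(m_1,\dots,m_t,s)$ via Lemma~\ref{l:same-h}; for $s>2$ apply Theorem~\ref{9.12} (the paper uses $B$ rather than $A$ as the forcing pattern, but either works, since $s\ne m_{t+1}$ prevents a block structure over $C$ by Lemma~\ref{l:bloc-tow}); for $s=2$ go through an intermediate odd period via Proposition~\ref{ext}, then Theorem~\ref{2.10.8} and Theorem~\ref{kneading2}. Your reduction of the general ``Thus'' statement to the strong form, and the final appeal to Lemma~\ref{towertails}, are also what the paper does. (Your split into $s$ even versus $s$ odd for producing $C$ is harmless but unnecessary: Lemma~\ref{l:same-h} already covers every $s>1$, and the ``$C$ is not a doubling'' argument works for every $s\ne2$, not only odd $s$.)

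The ``delicate step'' you flag at the end is not in the paper and is not needed. The paper's $s=2$ argument simply stops at ``$B$ forces all extensions of $D$ by a unimodal pattern with division, as desired'' and makes no attempt to single out one doubling $C$ of $D$. The reason this suffices is that the only way the first sentence is used---both in deducing the ``Thus'' clause here and later in Theorem~\ref{t:2nd}---is to force a pattern with a \emph{prescribed tower} beginning with $\mathcal M'$. An extension of $D$ by a unimodal pattern $E$ with division has tower $(m_1,\dots,m_t)$ followed by $\dtp(E)$, and by Theorem~\ref{kneading1} one can take $\dtp(E)$ to be any tower beginning with $2$; hence every tower beginning with $\mathcal M'=(m_1,\dots,m_t,2)$ is realized by a forced pattern. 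Reading ``all unimodal extensions of $C$'' as a demand to fix a single $C$ and hit literally every unimodal extension of that particular $C$ is stronger than what the paper proves or uses; drop that last paragraph and your argument is complete.
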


\begin{proof}
If a pattern $A$ has tower $\mathcal N'$, then by
Lemma \ref{l:triv} it forces a pattern $B$ with tower
$(m_1,$ $\dots,$ $m_t,$ $m_{t+1})$ over which $A$ has a block
structure ($A$ might also coincide with $B$). Then, by
Lemma~\ref{l:same-h}, $B$ forces a pattern $C$ with tower
$(m_1,\dots,m_t,s)$.

To prove the second claim, consider two cases. If $s>2$, then $C$
is not a doubling. Moreover, evidently the pattern $B$ does not have
block structure over the pattern $C$. Therefore, by
Theorem~\ref{9.12}, $B$ forces all unimodal extensions of $C$.

Let $s=2$ (and, hence, $m_{t+1}>2$). Denote by $D$ the pattern with
tower $(m_1, \dots, m_t)$ over which $B$ has a block
structure. Properties of the nbs order imply that there exists an odd
number $l>1$ such that $m_{t+1}\gg l\gg s_1=2$. By
Proposition~\ref{ext}, $B$ forces an extension of the pattern $D$ by
the \v Stefan pattern of period $l$. By Theorem~\ref{2.10.8}, $B$
forces extensions of $D$ by all patterns forced by the \v Stefan
pattern of period $l$. Therefore, by Theorem~\ref{kneading2}, $B$
forces all extensions of $D$ by a unimodal pattern with division as
desired.

The last part of the theorem immediately follows from the preceding
one.
\end{proof}

\begin{remark}
One can make a statement equivalent to Theorem~\ref{t:1st}, dealing
with patterns rather than cycles. Namely, if $\mathcal N\gg \mathcal
M$, then every pattern with tower $\mathcal N$ forces
a pattern with tower $\mathcal M$.
\end{remark}

To prove the second part of Main Theorem we will look at the family of
truncated tent maps $T_a$, $0<a\le 1$, given by $T_a(x)=\min(a,T(x))$,
where $T$ is the full tent map (see Section~\ref{s:unimodals}). This
is the same strategy that was used in order to prove the second part
of the Sharkovsky Theorem in~\cite{alm}. Recall that given a unimodal
pattern $D$ we define the number $\alpha(D)$ right after
Theorem~\ref{kneading1}.

\begin{theorem}[Second claim of Main Theorem]\label{t:2nd}
If $\mathcal K$ is an infinite tower, then there exists a continuous
interval map $g$ such that the set of all (finite) towers of cycles of
$g$ coincides with $\Tow(\mathcal K)$.
\end{theorem}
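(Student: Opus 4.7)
The plan is to realize $\Tow(\K)$ as the cycle-tower set of a truncated tent map $T_a$, following the strategy of the realization part of Sharkovsky's theorem. Recall from Section~\ref{s:unimodals} that the cycles of $T_a$ are precisely the unimodal patterns $A$ with $\alpha(A)\le a$, and that for unimodal patterns $A$ forces $B$ if and only if $\alpha(A)\ge \alpha(B)$. Since forcing is a partial order, equal $\alpha$-values yield mutual forcing and hence equal patterns, so $\alpha$ is injective on unimodal patterns. By Theorem~\ref{kneading1} every finite tower is the tower of some unimodal pattern, and because only finitely many unimodal patterns share a given tower (one for each orientation choice at each level of the block structure), the quantity
\[
\alpha_{\min}(\N) := \min\{\alpha(A) : A \text{ unimodal},\ \dtp(A)=\N\}
\]
is well-defined and attained for every finite tower $\N$.

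The key monotonicity is: $\N \gg \M$ implies $\alpha_{\min}(\N) > \alpha_{\min}(\M)$. Indeed, let $A$ attain $\alpha_{\min}(\N)$. Applying Theorem~\ref{t:1st} (combined with Theorem~\ref{kneading1} to supply a unimodal building block for the remaining positions below $t$), $A$ forces a unimodal pattern $C$ with $\dtp(C)=\M$; since $C$ is realized as a cycle of $T_{\alpha(A)}$ we have $\alpha(C)\le \alpha(A)$, and $C\ne A$ upgrades this to $\alpha(C)<\alpha(A)$ by injectivity of $\alpha$. Hence $\alpha_{\min}(\M)\le \alpha(C)<\alpha(A)=\alpha_{\min}(\N)$. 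With this in hand I set
\[
a_\K := \sup_{\N\in \Tow(\K)} \alpha_{\min}(\N) \quad \text{and} \quad g:=T_{a_\K}.
\]
In type (b) (when $\K$ is the canonical extension of a finite tower $\widetilde{\K}$), the supremum is attained at $\widetilde{\K}$, so $a_\K=\alpha_{\min}(\widetilde{\K})$. In type (a), $\Tow(\K)$ has no $\gg$-maximum --- each beginning $(m_0,\dots,m_j)$ of $\K$ is strictly $\gg$-dominated by $(m_0,\dots,m_j,m_{j+1})\in \Tow(\K)$ --- so the supremum is an unattained limit.

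I then verify that the cycle-towers of $g$ are exactly $\Tow(\K)$. The inclusion $\Tow(\K)\subseteq$ cycle-towers$(g)$ is immediate, since the $\alpha_{\min}$-realizer for each $\N\in \Tow(\K)$ has $\alpha\le a_\K$ and is thus a cycle of $g$. Conversely, if $B$ is the unimodal pattern of a cycle of $g$ with $\dtp(B)\notin \Tow(\K)$, then $\dtp(B)\gg \K$ by linearity of the $\gg$-order, hence $\dtp(B)\gg \N$ for every $\N\in \Tow(\K)$. The monotonicity then gives $\alpha(B)\ge \alpha_{\min}(\dtp(B)) > \alpha_{\min}(\N)$ for all such $\N$, so $\alpha(B)\ge a_\K$; combined with $\alpha(B)\le a_\K$ this forces $\alpha(B)=a_\K$. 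In type (b) the strict inequality $\alpha_{\min}(\dtp(B))>\alpha_{\min}(\widetilde{\K})=a_\K$ contradicts $\alpha(B)=a_\K$, concluding this case.

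The main obstacle I anticipate is ruling out the borderline $\alpha(B)=a_\K$ in type (a), where $a_\K$ is an unattained supremum and the elementary inequalities above do not yet suffice. My plan is to exploit the infinite renormalization encoded by $\K=(m_0,m_1,\dots)$: using the star product from Section~\ref{s:unimodals} I would build unimodal patterns $A_0*A_1*\cdots *A_j$ with towers the successive beginnings $(m_0,\dots,m_j)\in \Tow(\K)$, whose $\alpha$-values increase to $a_\K$ and whose kneading sequences converge to an infinitely renormalizable, not eventually periodic, limit. Since any unimodal pattern $B$ corresponds to a periodic cycle of $T$ and therefore has an eventually periodic kneading sequence, $\alpha(B)$ cannot equal such a non-periodic limit. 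This distinguishes $a_\K$ from every $\alpha$-value of a unimodal pattern and closes the type (a) case.
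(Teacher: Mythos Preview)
Your overall strategy---truncated tent maps, the monotone invariant $\alpha_{\min}$, and the characterization of forcing among unimodal patterns via $\alpha$---is exactly the paper's. A small correction: your parenthetical ``one for each orientation choice at each level of the block structure'' is wrong (there are many unimodal patterns with a given tower, not just $2^k$), but since only finiteness is needed and there are finitely many unimodal patterns of a given period, $\alpha_{\min}$ is well-defined regardless. Also, the fact that the $C$ you obtain from Theorem~\ref{t:1st} is \emph{unimodal} deserves one more sentence: $T_{\alpha(A)}$ has a cycle of pattern $A$, hence by forcing has a cycle with tower $\M$, and every cycle of a unimodal map has a unimodal pattern.

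The genuine divergence from the paper is in the type~(a) endgame. The paper does \emph{not} argue that $a_\K$ is aperiodic for $T$. Instead, it shows directly that $\alpha_{\min}(\M)>a_\K$ for every $\M\notin\Tow(\K)$ by inserting an explicit intermediate pattern: if $\M$ and $\K$ first differ at level $j$, one takes the unimodal pattern $D$ with kneading sequence $RL^3$ (period~$5$, no block structure, but \emph{not} forcing-minimal), checks that $\alpha(D)>\alpha(B_4)$ so that $D$ forces every $B_n$, and then $B_{k_1}*\cdots*B_{k_j}*D$ is forced by $A_\M$ (via Theorem~\ref{t:1st}) and forces every $A_{\K_n}$ (via Theorem~\ref{2.10.8}). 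This single pattern gives a finite buffer $\alpha(A_\M)>\alpha(B_{k_1}*\cdots*B_{k_j}*D)\ge\beta(\K)$, and the argument stays purely combinatorial.

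Your aperiodicity route is viable but, as written, is only a sketch with real work left. You would need to (i) choose the $A_i$ to be forcing-minimal so that the star products actually realize $\alpha_{\min}(\K_j)$ (this uses Theorem~\ref{2.10.8}, as in the paper), (ii) prove that the limiting infinite star-product kneading sequence is aperiodic, and (iii) argue that the itinerary of $a_\K$ under $T$ equals this limit---which needs continuity of the itinerary map at $a_\K$, hence the observation that a periodic point of $T$ is never a preimage of the turning point. None of this is hard, but the paper's intermediate-pattern trick bypasses all of it in two lines.
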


\begin{proof}
For every $n$ there is a unimodal pattern $B_n$ which is
forcing-minimal among unimodal patterns of period $n$ with no block
structure (they are identified in~\cite{mis94}). By
Theorem~\ref{2.10.8}, for a finite tower
$\M=(m_1,\dots,m_s)$, the unimodal pattern $A_\M=B_{m_1}*\dots
*B_{m_s}$ (see Section~\ref{s:unimodals}) is forcing-minimal among
unimodal patterns with tower $\M$. In other words,
$\alpha(A_\M)$ is smaller than $\alpha(C)$ for every other unimodal
pattern $C$ with tower $\M$. Thus, the map $T_\M$,
defined as $T_\M=T_{\alpha(A_\M)}$, has (by the first claim of Main
Theorem) cycles of all towers from $\Tow(\M)$ and (by
the connection between forcing and the values of $\alpha$) no cycles
of other towers, so that the function $\psi:\M\to
\alpha(A_\M)$ is monotone as a map from the space of all towers with
order $\gg$ to the interval $[0, 1]$.

Let $\K=(k_1,k_2,\dots)$ be an infinite tower with $k_n>1$ for every
$n$. For each $n$ we define a finite tower $\K_n$ by
$\K_n=(k_1,k_2,\dots,k_n)$. Then the sequence $\alpha(A_{\K_n})$ is
increasing, so it has the limit, which we will denote $\beta(\K)$.
Consider the map $T_\K=T_{\beta(\K)}$. We claim that it has cycles of
all towers from $\Tow(\K)$ and no cycles of other
towers.

The first part of the claim is immediate. Indeed,
$\beta(\K)>\alpha(A_{\K_n})$ for every $n$, so $T_\K$ has cycles of
all towers $\M$ for which there is $n$ with $\K_n\gg
\M$. However, these are exactly all towers from $\Tow(\K)$.

To prove the second part of the claim, consider a tower $\M=(m_1,
\dots, m_s)\notin\Tow(\K)$. Then $\M\gg \K_n$ for every $n$. Since
$\psi$ is monotone, then $\psi(\M)=\alpha(A_\M)>\psi(\K_n)$ which
implies that $\psi(\M)\ge \beta(\K)$. We claim that
$\psi(\M)>\beta(\K)$. We may assume
that $m_1=k_1,$ $\dots,$ $m_{j-1}=k_{j-1},$ but $m_j\gg k_j$. Let $D$
be the unimodal pattern with the kneading sequence $RL^3$. It is easy
to check that $\alpha(D)>\alpha(B_4)$, so by Theorem~\ref{t:mix-ord}
$D$ forces all patterns $B_n$. Therefore, by Theorem~\ref{t:1st},
$A_\M$ forces $B_{k_1}*\dots *B_{k_j}*D$, which in turn forces all
patterns $A_{K_n}$. Thus, $\alpha(B_{k_1}*\dots *B_{k_j}*D)$ is an
upper bound of the set $\{\alpha(A_{\K_n}):n=1,2,\dots\}$, so
$\alpha(C)>\alpha(B_{k_1}*\dots *B_{k_j}*D)$ cannot be its supremum.
This is a contradiction, and hence $T_\K$ has no cycles with towers
not in $\Tow(\K)$.
\end{proof}

\end{document}